\def\@seccntDot{.}
\def\@seccntformat#1{\csname the#1\endcsname\@seccntDot\hskip 0.5em}
\renewcommand\section{\@startsection{section}{1}{\z@}%
{18\p@ \@plus 6\p@ \@minus 3\p@}%
{9\p@ \@plus 6\p@ \@minus 3\p@}%
{\large\bfseries\boldmath}}
\renewcommand\subsection{\@startsection{subsection}{2}{\z@}%
{12\p@ \@plus 6\p@ \@minus 3\p@}%
{3\p@ \@plus 6\p@ \@minus 3\p@}%
{\it}}
\renewcommand\subsubsection{\@startsection{subsubsection}{3}{\z@}%
{12\p@ \@plus 6\p@ \@minus 3\p@}%
{\p@}%
{\bfseries\boldmath}}
\theoremstyle{plain}
\newtheorem{theorem}{Theorem}[section]
\newtheorem{lemma}{Lemma}[section]
\theoremstyle{definition}
\newtheorem{remark}{Remark}[section]
\newtheorem{claim}{Claim}[section]
\numberwithin{equation}{section}
\newcommand{\x}{{\bf x}}
\title{Maximum principal ratio of the signless Laplacian of graphs}
\author{
Lele Liu\thanks{College of Science, University of Shanghai for Science and Technology, Shanghai 200093, China
(\texttt{ahhylau@outlook.com}). This author is supported by the National Natural Science Foundation of China (No. 12001370).}
\and
Shengming Hu\thanks{College of Science, University of Shanghai for Science and Technology, Shanghai 200093, China
(\texttt{hsm606125@163.com})}
\and 
Changxiang He\thanks{Corresponding author. College of Science, University of Shanghai for Science and Technology, Shanghai 200093, China
(\texttt{changxiang-he@163.com})}
}
\date{}
\begin{document}
\maketitle

\begin{abstract}
Let $G$ be a connected graph and $Q(G)$ be the signless Laplacian of $G$. The principal ratio $\gamma(G)$
of $Q(G)$ is the ratio of the maximum and minimum entries of the Perron vector of $Q(G)$. In this paper, 
we consider the maximum principal ratio $\gamma(G)$ among all connected graphs of order $n$, and show 
that for sufficiently large $n$ the extremal graph is a kite graph obtained by identifying an end vertex 
of a path to any vertex of a complete graph.
\par\vspace{2mm}

\noindent{\bfseries Keywords:} Principal ratio; Kite graph; Signless Laplacian.
\par\vspace{1mm}

\noindent{\bfseries AMS Classification:} 05C50; 15A18.
\end{abstract}

\section{Introduction}

In this paper, we consider only simple, undirected graphs, i.e, undirected graphs without multiple edges or loops.
The signless Laplacian of a graph $G$ is defined as $Q(G)=D(G)+A(G)$, where $D(G)$ is the diagonal matrix of vertex 
degrees of $G$, and $A(G)$ is the adjacency matrix of $G$. The largest eigenvalue of $Q(G)$, denoted by $q_1(G)$, 
is referred to as the \emph{$Q$-spectral radius} of $G$. For a connected graph $G$, the Perron--Frobenius theorem 
implies that $Q(G)$ has a unique positive unit eigenvector $\x$ corresponding to $q_1(G)$, which is called the 
\emph{principal $Q$-eigenvector} of $G$. Let $x_{\min}$ and $x_{\max}$ be the smallest and the largest entries 
of $\x$, respectively. We define the principal ratio $\gamma(G)$ of $Q(G)$ as
\[
\gamma(G):= \frac{x_{\max}}{x_{\min}}.
\]
Evidently, $\gamma(G)\geq 1$ with equality if and only if $G$ is regular. Therefore, it can be considered as a measure 
of graph irregularity. In this paper, we consider the principal ratio of graphs and determine the unique extremal 
graph maximizing $\gamma(G)$ among all connected graphs on $n$ vertices for sufficiently large $n$.

The principal ratio of the adjacency matrices of graphs has been well studied. In 1958, Schneider 
\cite{Schneider1958} presented an upper bound on eigenvectors of irreducible nonnegative matrices; for graphs it 
can be described as $\gamma(G)\leq (\lambda_1(G))^{n-1}$, where $\lambda_1(G)$ is the largest eigenvalue of the 
adjacency matrix of $G$. Nikiforov \cite{Nikiforov2007} improved this result for estimating the gap of spectral 
radius between $G$ and its proper subgraph. Subsequently, Cioab\u a and Gregory \cite{CioabGregory2007} slightly 
improved the previous results for $\lambda_1(G) > 2$. In addition, they also proved some lower bounds on $\gamma(G)$, 
which improved previous results of Ostrowski \cite{Ostrowski1952} and Zhang \cite{Zhang2005}.

Recall that the {\em kite} or {\em lollipop} graph, denoted $P_r \cdot K_s$, is obtained by identifying an end 
vertex of the path $P_r$ to any vertex of the complete graph $K_s$. In 2007, Cioab\u a and Gregory \cite{CioabGregory2007} 
initially posed the conjecture that among all connected graphs of order $n$, the kite graph attains the maximum 
ratio of the largest and smallest Perron vector entries of $A(G)$. In 2018, Tait and 
Tobin \cite{TaitTobin2018} confirmed the conjecture for sufficiently large $n$. Recently, Liu and He \cite{LuHe2021} 
improved the condition as $n\geq 5000$. Using the method posed by Tait and 
Tobin \cite{TaitTobin2018} we can prove the main result of this paper as follows.

\begin{theorem}\label{thm:Main}
For sufficiently large $n$, the connected graph $G$ on $n$ vertices with maximum principal ratio of $Q(G)$ is a kite graph.
\end{theorem}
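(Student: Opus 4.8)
The plan is to follow the strategy of Tait and Tobin, adapting each ingredient from the adjacency matrix to the signless Laplacian $Q(G)=D(G)+A(G)$. Let $G$ be an extremal graph, $\x$ its principal $Q$-eigenvector normalized so that $x_{\max}=1$, and let $u$ be a vertex with $x_u=x_{\max}=1$ and $w$ a vertex with $x_w=x_{\min}$. First I would record the eigenvalue equation $q_1 x_v = d_v x_v + \sum_{v'\sim v} x_{v'}$ and the Rayleigh-type identity $q_1 = \sum_{vv'\in E}(x_v+x_{v'})^2 / \sum_v x_v^2$, and establish coarse bounds on $q_1(G)$ for the extremal graph: on the one hand $q_1 \ge q_1(P_r\cdot K_s)$ for the best kite, which one checks is $2n-o(n)$ when $s$ is a large constant and $r=n-s$; on the other hand $q_1 \le 2(n-1)$ always. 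Combining these forces $q_1 = 2n - O(1)$, which is the crucial leverage: a vertex of small degree $d_v$ satisfies $(q_1-d_v)x_v = \sum_{v'\sim v}x_{v'}$, and since $q_1-d_v$ is huge while the right side has few terms each at most $1$, such a vertex has very small eigenvector entry. This is how the ``long path'' part of the extremal graph is forced.

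The second block of steps analyzes the structure near $u$. Using the eigen-equation at $u$ together with $x_u=1$ and $q_1=2n-O(1)$, I would show that $u$ has degree $n-O(1)$, i.e. $u$ is adjacent to all but a bounded number of vertices, and moreover that the entries $x_{v'}$ for $v'\sim u$ are all close to $1$ — more precisely $\sum_{v'\sim u}(1-x_{v'})=O(1)$. Then one argues that the set $H$ of vertices with $x_v$ bounded away from $0$ (say $x_v \ge \varepsilon$) has bounded size and induces a near-clique: any two such vertices that were non-adjacent could be joined, strictly increasing $q_1$ hence (via standard Perron-vector perturbation, since adding an edge raises $q_1$ and one checks it cannot increase $x_{\min}$ relative to $x_{\max}$ in a way that lowers $\gamma$) contradicting extremality; here I must be careful because, unlike for $A(G)$, adding an edge also changes the diagonal $D(G)$, so the monotonicity argument needs the identity $\x^\top Q(G+e)\x - \x^\top Q(G)\x = (x_a+x_b)^2 > 0$ for the added edge $e=ab$, and then the usual comparison. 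The complement of $H$ must then be an induced path (or union of paths) hanging off $H$, and minimality of $\gamma$'s denominator forces it to be a single path attached at one vertex.

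The third block pins down the exact structure: $G = P_r\cdot K_s$. Having shown $G$ consists of a bounded ``head'' $H$ of size $s=O(1)$ and a pendant path $P_r$ with $r=n-s+1$, I would optimize over the head. Writing the eigen-equations along the path gives $x$-values decaying geometrically with ratio $\to 1$ (governed by the recurrence $q_1 x_i = 2x_i + x_{i-1}+x_{i+1}$ on internal path vertices, whose characteristic roots are $t, 1/t$ with $t+1/t = q_1-2$), so $x_{\min}$ is the entry at the far end of the path and $\gamma \approx t^{r}$; increasing $r$ (hence decreasing $q_1-2$, hence pushing $t$ toward $1$) trades off against the length, and one shows the product is maximized by taking $H$ to be a complete graph $K_s$ of the right bounded size attached at a single vertex, and that attaching the path to any vertex of $K_s$ gives the same $\gamma$. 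Routine but delicate interchange/local-modification arguments (moving an edge within $H$, or lengthening the path at the expense of $H$) rule out every competitor, giving uniqueness.

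The main obstacle I anticipate is exactly the place where $Q(G)$ departs from $A(G)$: the diagonal degree term means that local graph modifications change $q_1$ and the Perron vector in a coupled way, so the clean monotonicity and perturbation estimates of Tait--Tobin must be redone with the extra $D(G)$ contribution tracked throughout — in particular proving that the extremal graph's spectral radius is $2n-O(1)$ and that low-degree vertices have exponentially small eigenvector entries requires care, since $q_1 - d_v$ is what drives the decay and $d_v$ now appears explicitly. Once that quantitative control is in hand, the combinatorial skeleton of the Tait--Tobin argument goes through essentially unchanged.
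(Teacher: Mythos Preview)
Your proposal contains a fundamental quantitative error that derails the entire strategy. You assert that for the extremal graph $q_1 = 2n - O(1)$, and that the ``head'' $H$ has bounded size $s = O(1)$. Both claims are false, and in fact they are mutually inconsistent: if $u$ has degree $n-O(1)$ and its neighbours all have entries close to $1$, then the set of vertices with $x_v$ bounded away from $0$ has size $n-O(1)$, not $O(1)$. More importantly, the optimal kite $P_r\cdot K_s$ does \emph{not} have $s$ constant. A back-of-the-envelope computation shows $\log\gamma(P_r\cdot K_s)\approx (n-s)\log(2s)$, which is maximised at $s\sim n/\log n$; correspondingly $q_1\sim 2n/\log n$, not $2n-O(1)$. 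Your claimed lower bound ``$q_1\ge q_1(P_r\cdot K_s)=2n-o(n)$ when $s$ is a large constant'' is simply wrong: for constant $s$ one has $q_1(P_r\cdot K_s)\approx 2(s-1)$, a constant.

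The paper's proof works with the correct scale throughout. It fixes a shortest path $v_1,\ldots,v_k$ from the min-entry vertex to the max-entry vertex, proves $2(n-k)<q<2(n-k+1)$ and $n-k=(1+o(1))n/\log n$ (Lemmas~3.1--3.2), and then uses delicate edge-deletion/addition arguments together with the recurrence bound (Lemma~2.2) to force $d(v_{k-2})=2$ and $d(v_{k-1})=2$ separately. The key step you are missing is a sharpening $q<2(n-k)+3/2$ (Lemma~3.4), which yields $x_{k-1}<n^{-1/6}$ (Lemma~3.5); this small-entry estimate is what makes the final ``remove the extra edges at $v_{k-1}$'' perturbation go through. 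None of these arguments are available if one believes the head is bounded and $q_1\approx 2n$: the perturbation estimates all hinge on comparing quantities of size $\Theta(n/\log n)$ against error terms like $k/q^2$ or $1/(n-k)$, and the trade-offs are genuinely tight. You should redo the parameter analysis from scratch with the correct asymptotics before attempting the structural steps.
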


\section{Preliminaries}

For graph notation and concepts undefined here, we refer the reader to \cite{BondyMurty2008}. Given a connected 
graph $G$ on $n$ vertices and a vertex $v$ in $G$, we write $N_G(v)$ for the set of neighbors of $v$ and $d_G(v)$ 
the degree of $v$ in $G$. For convenience, we drop the subscript when it is understood. Recall that a \emph{pendant path} 
is a path with one end vertex of degree one and all the internal vertices of degree two. 

Let $G$ be a graph with vertex set $V(G)=\{v_1,v_2,\ldots,v_n\}$. If $q_1(G) > 4$, we denote
\[
\sigma(G):= \frac{q_1(G) - 2 + \sqrt{q_1(G)^2 - 4q_1(G)}}{2}.
\]
Let $\x$ be the principal $Q$-eigenvector of $G$. Hereafter, we write $x_i$ for the entry of $\x$ corresponding 
to the vertex $v_i\in V(G)$. 

The following lemma give an upper bound for $\gamma(G)$ in terms of $\sigma(G)$.

\begin{lemma}\label{lem:gamma}
Let $G$ be a connected graph, and $\x$ be the principal $Q$-eigenvector of $G$. Suppose that 
$v_1,v_2,\ldots,v_k$ is a shortest path between $v_1$ and $v_k$, where $v_1$ attains the minimum component of 
$\x$ and $v_k$ attains the maximum. If $q_1(G) > 4$ and $x_k=1$, then for $1 \leq j \leq k$,
\begin{equation}\label{eq:gamma}
\gamma(G)\leq\frac{\sigma(G)^j - \sigma(G)^{-j} + \sigma(G)^{j-1} - \sigma(G)^{-(j-1)}}{\sigma(G) - \sigma(G)^{-1}} 
\cdot\frac{1}{x_j}, 
\end{equation}
with equality if $v_1,v_2,\ldots,v_j$ form a pendant path.
\end{lemma}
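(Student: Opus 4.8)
\medskip
\noindent\textbf{Plan of proof.}
Since $x_k=1$ and $v_1,v_k$ attain the extreme entries of $\x$, we have $\gamma(G)=x_{\max}/x_{\min}=1/x_1$, so \eqref{eq:gamma} is equivalent to the single inequality
\[
x_j\ \le\ c_j\,x_1,\qquad c_j:=\frac{\sigma^{\,j}-\sigma^{-j}+\sigma^{\,j-1}-\sigma^{-(j-1)}}{\sigma-\sigma^{-1}},\qquad \sigma=\sigma(G).
\]
The idea is to recognize $(c_i)$ as (a scaling of) the sequence the Perron equation forces along a pendant path, and then to compare $\x$ with it via a discrete Wronskian. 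Put $c_0:=-1$, $c_1:=1$, and $c_{i+1}:=(q_1-2)c_i-c_{i-1}$, where $q_1=q_1(G)$. Since $q_1^2-4q_1=(q_1-2)^2-4$, the roots of $t^2-(q_1-2)t+1$ are exactly $\sigma$ and $\sigma^{-1}$, so $\sigma+\sigma^{-1}=q_1-2$, and solving the recurrence gives $c_i=\dfrac{(1+\sigma^{-1})\sigma^{\,i}-(1+\sigma)\sigma^{-i}}{\sigma-\sigma^{-1}}$, which collapses to the displayed $c_j$ when $i=j$. Because $q_1>4$ forces $\sigma>1$, factoring $1+\sigma=\sigma(1+\sigma^{-1})$ rewrites this as $c_i=\tfrac{1}{\sigma-1}(\sigma^{\,i}-\sigma^{\,1-i})$, whence $c_i>0$ for every $i\ge1$ (while $c_0=-1<0$); this positivity is what the comparison needs.

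Next I would extract the relevant inequalities from $Q(G)\x=q_1\x$ and the Perron--Frobenius positivity of $\x$. Reading the equation at the minimal vertex $v_1$ and keeping only the neighbour $v_2$ gives $d(v_1)x_1+x_2\le q_1 x_1$, hence $x_2\le(q_1-1)x_1$ since $d(v_1)\ge1$. For an interior vertex $v_i$ with $2\le i\le j-1$ (these indices lie in the interior of the path because $j\le k$), the vertices $v_{i-1}$ and $v_{i+1}$ are two distinct neighbours of $v_i$, so the equation at $v_i$, together with $d(v_i)\ge2$ and $x_l>0$, yields
\[
x_{i-1}+x_{i+1}\ \le\ \bigl(q_1-d(v_i)\bigr)x_i\ \le\ (q_1-2)x_i .
\]

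Finally I would run the comparison. Introduce the virtual value $x_0:=-x_1$, so that $x_2\le(q_1-1)x_1$ reads $x_0+x_2\le(q_1-2)x_1$, matching the identity $c_0+c_2=(q_1-2)c_1$; and set $w_i:=c_i x_{i+1}-c_{i+1}x_i$ for $0\le i\le j-1$. Then $w_0=c_0x_1-c_1x_0=-x_1+x_1=0$, and using $c_{i-1}+c_{i+1}=(q_1-2)c_i$ together with the displayed inequality,
\[
w_i-w_{i-1}=c_i\bigl((x_{i+1}+x_{i-1})-(q_1-2)x_i\bigr)\ \le\ 0\qquad(1\le i\le j-1),
\]
since $c_i>0$. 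Hence $w_i\le w_0=0$, i.e. $x_{i+1}/c_{i+1}\le x_i/c_i$, and telescoping from $i=1$ to $i=j-1$ gives $x_j/c_j\le x_1/c_1=x_1$, that is $x_j\le c_j x_1$, which is \eqref{eq:gamma}. For the equality clause: if $v_1,\dots,v_j$ form a pendant path then $d(v_1)=1$ and $d(v_i)=2$ with $N(v_i)=\{v_{i-1},v_{i+1}\}$ for $2\le i\le j-1$, so every inequality above is an equality, every $w_i$ vanishes, and $x_i=c_i x_1$ for all $i\le j$; in particular equality holds in \eqref{eq:gamma}.

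I expect the only real friction to be bookkeeping: checking that the closed form of $c_i$ really collapses to the coefficient in \eqref{eq:gamma} (routine algebra using $\sigma\sigma^{-1}=1$ and $\sigma+\sigma^{-1}=q_1-2$), and confirming $c_i>0$ for $i\ge1$ so the Wronskian inequality can be divided through; once those are in hand the telescoping is immediate. Beyond the positivity of $\x$ and the hypothesis $q_1(G)>4$ (which is exactly what makes $\sigma>1$), nothing else should be needed.
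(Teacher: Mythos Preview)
Your proof is correct and follows essentially the same approach as the paper: both extract the inequalities $(q_1-1)x_1\ge x_2$ and $(q_1-2)x_i\ge x_{i-1}+x_{i+1}$ from the eigenvalue equation, encode them via the same second-order linear recurrence (your $c_i$ is the paper's $U_{i-1}$, with the same closed form), and telescope to $x_j\le c_jx_1$. The only cosmetic difference is packaging: you run the induction as a discrete Wronskian $w_i=c_ix_{i+1}-c_{i+1}x_i\le 0$ and verify $c_i>0$ upfront from the closed form, whereas the paper inducts directly on the ratios $x_i/x_{i+1}\ge U_{i-1}/U_i$ and checks $U_i>0$ step by step.
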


\begin{proof}
Set $q:=q_1(G)$ for short. By the eigenvalue equations of $Q(G)$ we have
\begin{align*}
(q - 1) x_1 & \geq x_2 \\
(q - 2) x_2 & \geq x_1 + x_3 \\
(q - 2) x_3 & \geq x_2 + x_4 \\
& ~\,\vdots \\
(q - 2) x_{j-1} & \geq x_{j-2} + x_j.
\end{align*}
Based on the first two inequalities, we have
\[
x_1 \geq \frac{x_2}{q - 1},~~ x_2 \geq \frac{q - 1}{(q - 2)(q - 1) - 1} x_3.
\]
Now we assume that 
\[
x_i \geq \frac{U_{i-1}}{U_i} x_{i+1}
\]
with $U_j$ positive for all $j \leq i$. Since $(q - 2) x_{i+1} \geq x_i + x_{i+2}$, we get 
\[
x_{i+1}\geq\frac{U_i}{(q - 2) U_i - U_{i-1}} x_{i+2},
\]
where $(q - 2) U_i - U_{i-1} > 0$ as $(q - 2) x_{i+1} > x_i \geq x_{i+1} U_{i-1}/U_i$. 
So we let 
\[
U_{i+1} = (q - 2) U_i - U_{i-1}
\]
with $U_0=1$, $U_1 = q-1$. Solving this recurrence and using the initial conditions, we obtain 
\begin{equation}\label{eq:u_i}
U_i = \frac{\sigma(G)^{i+1} - \sigma(G)^{-(i+1)} + \sigma(G)^i - \sigma(G)^{-i}}{\sigma(G) - \sigma(G)^{-1}}.
\end{equation}
As a consequence,
\[
x_1 \geq \frac{U_0}{U_1} \cdot x_2 \geq \prod_{i=1}^2 \frac{U_{i-1}}{U_i} \cdot x_3
\geq \cdots \geq \prod_{i=1}^{j-1} \frac{U_{i-1}}{U_i} \cdot x_j
= \frac{x_j}{U_{j-1}}.
\]
Hence, for $1 \leq j \leq k$,
\[
\gamma(G)=\frac{x_k}{x_1} = \frac{1}{x_1}\leq \frac{U_{j-1}}{x_j}.
\]

Finally, if $v_1,v_2,\ldots,v_j$ form a pendant path, then we have all equalities 
throughout, as desired.
\end{proof}

\autoref{lem:gamma} will be used frequently in the sequel. For the sake of convenience, we denote
\[
U_i(G):= \frac{\sigma(G)^{i+1} - \sigma(G)^{-(i+1)} + \sigma(G)^i - \sigma(G)^{-i}}{\sigma(G) - \sigma(G)^{-1}}
\] 
for a connected graph $G$.

\begin{remark}
Consider the kite graph $P_k \cdot K_{n-k+1}$. It is straightforward to check that the smallest entry 
of the principal $Q$-eigenvector is the vertex of degree $1$ and the largest is the vertex of degree 
$n - k + 1$.  By \autoref{lem:gamma}, we have 
\[
\gamma(P_k \cdot K_{n-k+1}) = U_{k-1}(P_k \cdot K_{n-k+1}).
\]
\end{remark}

\begin{lemma}\label{lem:bound-Ui}
Let $j\geq 2$ and $q:= q_1(G) > 4$. Then
\begin{equation}\label{eq:u}
(q - 1)\Big(q - 2 - \frac{1}{q - 3}\Big)^{j-2} \leq U_{j-1}(G)
\leq (q - 1)\Big(q - 2 - \frac{1}{q}\Big)^{j-2}.  
\end{equation}
\end{lemma}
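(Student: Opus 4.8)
The plan is to analyze the recurrence $U_{i+1} = (q-2)U_i - U_{i-1}$ with $U_0 = 1$, $U_1 = q-1$ directly, rather than working with the closed form \eqref{eq:u_i}, since the closed form is awkward for extracting clean multiplicative bounds. The key observation is that if we can control the ratio $r_i := U_i / U_{i-1}$, then $U_{j-1} = (q-1)\prod_{i=2}^{j-1} r_i$, and each factor $r_i$ should be roughly $q-2$ with a small correction. From the recurrence, $r_{i+1} = (q-2) - 1/r_i$, so I would first show by induction that $r_i$ stays trapped in an interval $[q-2-1/q,\; q-2]$ (or something comparable) for all $i \geq 1$; note $r_1 = q-1 > q-2$, and one checks the map $t \mapsto (q-2) - 1/t$ maps a suitable interval into itself when $q > 4$.

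More precisely, the upper bound: since $r_1 = q-1 > q-2-1/q$ trivially, and more usefully $r_i > q-3$ for all $i$ (because $r_2 = (q-2) - 1/(q-1) > q-3$ and the map preserves the lower bound $q-3$ when $q-2 - 1/(q-3) > q-3$, i.e. $1 > 1/(q-3)$, i.e. $q > 4$), we get $r_{i+1} = (q-2) - 1/r_i > q - 2 - 1/(q-3)$ for $i \geq 1$, which combined with the trivial $r_i \le q-2$ wait — actually $r_1 = q-1 > q-2$, so I need to be careful: the factor $r_1 = q-1$ already appears as the leading $(q-1)$, and for $i \geq 2$ we have $r_i = (q-2) - 1/r_{i-1} < q-2$. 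That gives the lower bound on $U_{j-1}$: $U_{j-1} = (q-1)\prod_{i=2}^{j-1} r_i \geq (q-1)(q-2-\tfrac{1}{q-3})^{j-2}$ once we know $r_i \geq q-2-\tfrac{1}{q-3}$ for $2 \le i \le j-1$, which follows from $r_{i-1} \le q-3$? No — from $r_{i-1} > q-3$ hmm, that gives $1/r_{i-1} < 1/(q-3)$ hence $r_i > q-2 - 1/(q-3)$. Wait, I need $r_{i-1} \geq q-3$ to conclude $r_i = (q-2)-1/r_{i-1} \geq q-2-1/(q-3)$; and indeed $r_1 = q-1 \geq q-3$ and inductively $r_i \geq q-2-1/(q-3) \geq q-3$ iff $q > 4$, closing the induction. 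Similarly for the upper bound on $U_{j-1}$: for $i \geq 2$, $r_i = (q-2) - 1/r_{i-1} \leq q-2 - 1/(q-1) \leq q - 2 - 1/q$ would need $r_{i-1} \le q-1$; but $r_1 = q-1$ and inductively $r_i \le q-2 < q-1$, so $r_i \le q-2-\tfrac{1}{q-1} \le q-2-\tfrac1q$, giving $U_{j-1} \le (q-1)(q-2-\tfrac1q)^{j-2}$.

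So the proof structure I would write is: (1) establish $U_i = (q-1)\prod_{i=2}^i r_i$ and the recursion $r_{i+1} = (q-2)-1/r_i$; (2) prove by induction the two-sided bound $q-3 \le q-2-\tfrac{1}{q-3} \le r_i \le q-2$ for all $i \ge 2$ (and $r_1 = q-1$), using $q>4$; (3) sharpen to $q-2-\tfrac1q \geq q-2 - \tfrac{1}{q-1} \geq r_i$ for $i \ge 2$ by plugging $r_{i-1} \le q-1$ back in, and $r_i \ge q-2-\tfrac{1}{q-3}$ by plugging $r_{i-1} \ge q-3$ back in; (4) multiply the $j-2$ factors $r_2,\dots,r_{j-1}$ together and prepend $(q-1)$. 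The case $j=2$ is the base case where the product is empty and $U_1 = q-1$ matches both sides with exponent $0$.

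The main obstacle — really the only subtle point — is getting the induction invariants exactly right so that $q > 4$ (not something stronger) suffices: the lower invariant $r_i \ge q-3$ is exactly what makes $q-2 - 1/(q-3) \ge q-3 \iff q-3 \ge 1$ work, and one must check that the slightly stronger bounds $q-2-\tfrac{1}{q-3}$ and $q-2-\tfrac1q$ in the statement are implied (they are, since $q-3 \le q$ gives $\tfrac{1}{q-3} \ge \tfrac1q$, so the claimed lower bound $q-2-\tfrac{1}{q-3}$ is weaker than what the induction produces at intermediate steps but is exactly achieved at the final multiplication; symmetrically $q-2-\tfrac1q \ge q-2-\tfrac1{q-1}$). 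Once the interval for $r_i$ is pinned down, everything else is a one-line product estimate.
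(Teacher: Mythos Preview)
Your proposal is correct and follows essentially the same approach as the paper: write $U_{j-1} = (q-1)\prod_{i=2}^{j-1} r_i$ with $r_i = U_i/U_{i-1}$, use the recursion $r_i = (q-2) - 1/r_{i-1}$, and bound each ratio by induction. The only cosmetic difference is that the paper inducts directly on the target inequalities $q-2-\tfrac{1}{q-3} \le r_i \le q-2-\tfrac{1}{q}$ (checking that the map $t \mapsto (q-2)-1/t$ preserves each bound), whereas you route through the auxiliary invariants $q-3 \le r_{i-1} \le q-1$ before extracting the sharper bounds; this makes your write-up slightly longer but is mathematically equivalent.
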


\begin{proof}
Set $U_i:= U_i(G)$ for short. Recall that $U_1 = q - 1$ and $U_{i+1} = (q-2) U_i - U_{i-1}$. 
If $j=2$, the assertion holds trivially, so we assume that $j\geq 3$. 

Observe that
\[
\frac{U_{j-1}}{q - 1}=\frac{U_{j-1}}{U_1} = \prod_{i=2}^{j-1} \frac{U_i}{U_{i-1}},
\]
it turns to prove that for $2 \leq i \leq j-1$,
\[
q - 2 - \frac{1}{q - 3}\leq\frac{U_i}{U_{i-1}}\leq q - 2 - \frac{1}{q}.
\]
We first prove the right-hand side of \eqref{eq:u} by induction on $i$. For $i=2$, 
recall that $U_0=1$ and $U_1=q-1$ we have
\[
\frac{U_2}{U_1}=\frac{(q - 2) U_1 - U_0}{U_1} = q - 2 - \frac{1}{q - 1} < q - 2 - \frac{1}{q}.
\]
By induction, we assume that $U_{j-2}/U_{j-3} < q - 2 - 1/q$. Then
\[
\frac{U_{j-1}}{U_{j-2}} = \frac{(q - 2) U_{j-2} - U_{j-3}}{U_{j-2}}
< q - 2 - \frac{1}{q - 2 - 1/q} < q - 2 - \frac{1}{q},
\]
as desired. Likewise, the inequality of the left-hand side of \eqref{eq:u} can be proved by 
analogous arguments as above.
\end{proof}

\section{Proof of \autoref{thm:Main}}

Throughout the remainder of this paper, we always assume $G$ is a graph maximizing $\gamma(G)$ 
among all connected graphs on $n$ vertices, where $n$ is large enough. Let $V(G)=\{v_1,v_2,\ldots,v_n\}$
and $\x$ be the principal $Q$-eigenvector of $G$ with maximum entry is $1$. Without loss of generality, 
assume that $v_1$ attains the minimum entry of $\x$, while $v_k$ attains the maximum, that is, $x_k=1$.
Suppose that $v_1,v_2,\ldots,v_k$ is a shortest path of length $(k-1)$ between $v_1$ and $v_k$. 
Hereafter, we use $C$ to denote the set $V(G)\setminus\{v_1,\ldots,v_k\}$, and write $S:= C\cap N(v_{k-1})$. 
For convenience, we always set $q:=q(G)$, $\sigma:=\sigma(G)$ and $U_j := U_j(G)$. 

In this section, the proof of our main result is presented. To this end, we divide this section 
into three subsections. First, we show that the vertices $v_1,v_2,\ldots,v_{k-2}$ form a pendant 
path and that $v_k$ is connected to all of the vertices that are not on this path 
(Subsection \ref{subsection:auxiliary-results}). Next, we show that $v_{k-2}$ has degree exactly 
two (Subsection \ref{subsection:degree-v-k2-2}). Based on previous results, we then show that 
$v_{k-1}$ also has degree exactly two (Subsection \ref{subsection:degree-v-k1-2}). 
Finally, we prove that connecting any non-edge in $V(G)\setminus \{v_1,\ldots,v_k\}$ will increase 
the principal ratio, and hence, the extremal graph is exactly a kite graph.

Let us remark that $q > 4$. Indeed, if $q\leq 4$, then $G$ must be one of the path $P_n$, the cycle 
$C_n$ and the star $K_{1,3}$ whose principal ratio is less than that of $P_{n-2}\cdot K_3$.

\subsection{Some auxiliary results}
\label{subsection:auxiliary-results}

\begin{lemma}\label{lem:auxiliary-results}
The following statements hold.
\begin{enumerate}
\item[$(1)$] $d(v_k) = n-k+1$.
\item[$(2)$] $2(n-k)< q <2(n-k+1)$.
\item[$(3)$] $v_1,v_2,\ldots,v_{k-2}$ form a pendant path in $G$.
\end{enumerate} 
\end{lemma}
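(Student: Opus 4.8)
The plan is to establish the three parts in order, using the extremality of $G$ together with the bounds on $U_i(G)$ from \autoref{lem:bound-Ui}. The key quantitative fact driving everything is that, by the Remark, $\gamma(G) \ge \gamma(P_{n-2}\cdot K_3) = U_{n-3}(P_{n-2}\cdot K_3)$, and for the kite $P_{n-2}\cdot K_3$ one has $q_1 = O(1)$ (indeed $q_1 \to$ a constant slightly above $4$), so $\gamma(G) \ge (q_1-1)(q_1 - 2 - 1/(q_1-3))^{n-4}$ grows like $c^n$ for some $c>1$. On the other hand, \autoref{lem:gamma} with $j=k$ gives $\gamma(G) \le U_{k-1}(G) \le (q-1)(q-2-1/q)^{k-2}$, where now $q = q_1(G)$ could be as large as roughly $2n$. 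Comparing these two estimates will force $k$ to be large (in fact $k = n - O(1)$ is not immediate, but $k \to \infty$ and more precisely $k$ comparable to $n$ fails only if $q$ is small, which is the tension we exploit).

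For part (1) and (2): I would first argue that $q$ cannot be too large. If $q$ were, say, $\Omega(n)$, then $\gamma(G) \le (q-1)(q-2-1/q)^{k-2}$ would need $k$ small to avoid overshooting — but wait, large $q$ makes the bound larger, so that is the wrong direction; instead the point is that the \emph{lower} bound $\gamma(G)\ge c^n$ must be respected while simultaneously $q \le 2(n-1)$ always (a crude bound on $q_1$ of any graph on $n$ vertices, since $q_1 \le 2\Delta \le 2(n-1)$). The real work is showing $d(v_k) = n-k+1$, i.e. $v_k$ is adjacent to \emph{every} vertex of $C = V(G)\setminus\{v_1,\dots,v_k\}$. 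The standard Tait–Tobin-style move here: if some vertex $u \in C$ is not adjacent to $v_k$, add the edge $uv_k$; by the Rayleigh principle $q_1$ strictly increases, and I would show via \autoref{lem:gamma} (equation \eqref{eq:gamma} with an appropriate $j$, and the monotonicity of the function $t \mapsto U_{j-1}$ in $t = q$ encoded in \autoref{lem:bound-Ui}) that $\gamma$ strictly increases too, since $x_1 = x_{\min}$ can only go down relative to $x_k$. This contradicts extremality. One must be slightly careful that adding the edge keeps $v_1$ the min and $v_k$ the max and keeps the $v_1$–$v_k$ path shortest; that bookkeeping, plus controlling how $x_j$ changes, is the fiddly part. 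Once $d(v_k) = n-k+1$ is known, $2(n-k) < q < 2(n-k+1)$ should follow from a direct eigenvalue-equation estimate at $v_k$: $q\, x_k = d(v_k) x_k + \sum_{u\sim v_k} x_u$, and since all $x_u \in (x_{\min},1]$ with $x_{\min}$ exponentially small (from the $\gamma$ lower bound), the sum over the $\ge n-k$ clique-side neighbors is close to $n-k+1$, pinning $q$ to the claimed window.

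For part (3): I want $v_1, \dots, v_{k-2}$ to be a pendant path, i.e. each $v_i$ with $2 \le i \le k-2$ has degree exactly $2$ with neighbors $v_{i-1}, v_{i+1}$, and $v_1$ has degree $1$. Since $v_1 \cdots v_k$ is a \emph{shortest} path, already $v_i$ for $i \le k-2$ is nonadjacent to $v_j$ for $|i-j|\ge 2$ among path vertices, and by part (1) none of $v_1,\dots,v_{k-2}$ is adjacent to $v_k$ (else the path would short-cut); also no $v_i$ with $i \le k-3$ is adjacent to any vertex of $C$ or we would again shorten the $v_1$–$v_k$ distance via $v_k$'s full neighborhood in $C$. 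So the only thing to rule out is extra edges from $v_i$ ($2 \le i \le k-2$) to... essentially nothing is left except possibly $v_{k-1}$ being adjacent to $v_{k-3}$, and edges internal among $\{v_1,\dots,v_{k-2}\}$ of the form $v_iv_{i+2}$ — all excluded by shortest-path-ness — so the genuine content is: $v_1$ has no neighbor other than $v_2$, and $v_{k-2}$ sends no edge into $C$. Here I would again use an edge-perturbation / local-switching argument: if $v_{k-2}$ had a neighbor in $C$, or if there were a chord, one shows the configuration is dominated (in $\gamma$) by straightening it to a pendant path, invoking the equality case of \autoref{lem:gamma} (which says a genuine pendant path \emph{achieves} the $U_{j-1}$ bound, whereas any deviation makes $x_j$ strictly larger, lowering the bound on $\gamma$ — but since $G$ is extremal it must sit at the bound, forcing the pendant-path structure).

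The main obstacle I anticipate is part (1): carefully showing that adding the edge $u v_k$ strictly increases $\gamma(G)$ rather than just $q_1$. The subtlety is that $\gamma = 1/x_1$ depends on the whole eigenvector, and adding an edge perturbs every coordinate; one needs a clean monotonicity argument — most likely: the new graph $G'$ contains $G$ plus an edge, $q_1(G') > q_1(G)$, and then either a direct comparison of Perron vectors via the defining inequalities in the proof of \autoref{lem:gamma} (which only used $q > 4$ and the path structure near $v_1$, both preserved), showing $x_1(G')/x_k(G') \le x_1(G)/x_k(G)$, or an explicit two-graph eigenvalue-interlacing estimate. Getting the inequality strict, and making sure the "shortest path" and "$v_1$ is the min / $v_k$ is the max" hypotheses survive the perturbation (possibly after relabeling), is where the real care is needed.
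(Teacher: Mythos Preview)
Your plan diverges from the paper's proof and contains a genuine gap. For part~(1) the paper does \emph{not} use an edge-addition argument. Instead it first proves the lower bound $q > 2(n-k)$ by comparing $G$ to the kite $H = P_k \cdot K_{n-k+1}$ with the \emph{same} $k$: extremality and \autoref{lem:gamma} give $U_{k-1}(G) \ge \gamma(G) \ge \gamma(H) = U_{k-1}(H)$, and since $U_{k-1}$ is strictly increasing in $\sigma$ (hence in $q$), this forces $q \ge q_1(H) > 2(n-k)$. Then the eigenvalue equation at $v_k$ gives $2(n-k) < q = d(v_k) + \sum_{v\sim v_k} x_v < 2\,d(v_k)$, so $d(v_k) > n-k$; combined with the shortest-path constraint $d(v_k) \le n-k+1$ this pins $d(v_k) = n-k+1$, and feeding that back yields $q < 2(n-k+1)$. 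Your route inverts this order, trying to get $d(v_k) = n-k+1$ first via edge addition --- an obstacle you yourself flag, and a real one, since \autoref{lem:gamma} only furnishes an \emph{upper} bound on $\gamma$ and gives no direct control on $\gamma(G + uv_k)$. Your fallback for the lower bound on $q$ (``the sum over the clique-side neighbors is close to $n-k+1$'') also fails as stated: the eigenvalue equation reads $q = (n-k+1) + \sum_{u\sim v_k} x_u$, and knowing that $x_{\min}$ is tiny says nothing about whether each $x_u$ is near $1$. The comparison to $P_k \cdot K_{n-k+1}$ is the missing idea.

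For part~(3) you are also doing more work than the lemma asks. The pendant path $v_1,\dots,v_{k-2}$ only requires $d(v_1)=1$ and $d(v_i)=2$ for $2\le i\le k-3$; the endpoint $v_{k-2}$ may still have extra neighbors here (ruling those out is the separate \autoref{lem:degree-k-2-equal-2}). Once (1) gives $N(v_k) = C \cup \{v_{k-1}\}$, any edge from $v_i$ with $i\le k-3$ into $C$ would create a path $v_1,\dots,v_i,*,v_k$ of length $i+1 < k-1$, contradicting shortest-ness, so no perturbation argument is needed.
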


\begin{proof}
Let $H=P_k \cdot K_{n-k+1}$. By maximality and \autoref{lem:gamma}, we see
\[
U_{k-1}(G)\geq\gamma(G)\geq\gamma(H)=U_{k-1}(H).
\]
Notice that the function 
\[
f(x):= \frac{x^k - x^{-k} + x^{k-1} - x^{-(k-1)}}{x - x^{-1}}
\]
is increasing whenever $x\geq 1$. Hence, $\sigma(G)\geq\sigma(H)$. It follows that $q\geq q_1(H) > 2(n-k)$.

According to eigenvalue equation for $v_k$ and $x_k=1$, we find that
\begin{equation}\label{eq:eig-eq-for-q-upper-bound}
2(n-k) < q = q x_k = d(v_k) x_k + \sum_{v\in N(v_k)} x_v < 2 d(v_k),
\end{equation}
which implies $d(v_k)\geq n-k+1$. On the other hand, $v_k$ has no neighbors in $\{v_1,\ldots,v_{k-2}\}$,
as otherwise there would be a shorter path between $v_1$ and $v_k$. Thus, $d(v_k)\leq n-k+1$.
So we obtain $d(v_k) = n-k+1$. The inequality of the right-hand side of item (2) follows from 
\eqref{eq:eig-eq-for-q-upper-bound} and $d(v_k) = n-k+1$.

Finally, we prove the item (3). Since $d(v_k)=n-k+1$, we have $N(v_k) = C\cup\{v_{k-1}\}$. It 
follows that $v_1,\ldots,v_{k-3}$ have no neighbors off the path, otherwise there would be a 
shorter path between $v_1$ and $v_k$. Hence, $v_1,v_2,\ldots,v_{k-2}$ form a pendant path in $G$.
\end{proof}

To prove our main result, we need to make an estimation on $k$, as stated in the following lemma.

\begin{lemma}\label{lem:n-k-value}
$n-k = (1 + o(1)) \frac{n}{\log n}$.
\end{lemma}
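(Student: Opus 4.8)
The plan is to bootstrap between two facts: the principal ratio $\gamma(G)$ is simultaneously bounded below (by comparison with the kite $H=P_k\cdot K_{n-k+1}$, which forces $q$ to be large, of order $2(n-k)$) and bounded above by a crude estimate using the path of length $k-1$ from $v_1$ to $v_k$. Concretely, from \autoref{lem:auxiliary-results} we have $2(n-k)<q<2(n-k+1)$, so write $q = 2(n-k)(1+o(1))$. First I would use \autoref{lem:bound-Ui} to squeeze $\gamma(G) = U_{k-1}(G) \ge U_{k-1}(H)$ between
$
(q-1)\bigl(q-2-\tfrac{1}{q-3}\bigr)^{k-2}
$
and
$
(q-1)\bigl(q-2-\tfrac{1}{q}\bigr)^{k-2};
$
since $q\to\infty$ with $n$, both bounds are $q^{k-1}(1+o(1))^{k}$, so essentially $\gamma(G) = q^{\,(1+o(1))k}$, and the same holds for $\gamma(H)$ with its own (comparable) value of $q$. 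Thus $\log\gamma(G) = (1+o(1))\,k\log q = (1+o(1))\,k\log n$, using $\log q = \log(2(n-k)) = (1+o(1))\log n$ as long as $n-k\to\infty$ polynomially (which must be checked, but $n-k$ cannot be bounded or $\gamma$ would be polynomial in $n$, contradicting the lower bound from a long pendant path).

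Next I would produce an absolute lower bound on $\gamma(G)$ of the form $\gamma(G) \ge c^{\,n}$ for some constant $c>1$. This follows because $G$ is extremal, hence $\gamma(G)\ge\gamma(P_{n-2}\cdot K_3)$; for the triangle-kite one has $q_1$ bounded and $\gamma = U_{n-3}(P_{n-2}\cdot K_3)$, which by \autoref{lem:bound-Ui} grows like $(q-2-O(1))^{\,n-4} \ge c^{\,n}$ for a fixed $c>1$ (say any $c < q_1(P_{n-2}\cdot K_3)-2-1/(q_1-3)$, which exceeds $1$ once $q_1$ is bounded away from $4$). Combining, $n\log c \le \log\gamma(G) = (1+o(1))\,k\log n$, which gives $k \ge \Omega(n/\log n)$; in particular $k\to\infty$ and $n-k = n - \Omega(n/\log n)$, so $n-k\le n$ is consistent, but to pin the order I also need the matching upper bound on $k$.

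For the upper bound on $k$, I would use the lower estimate in \autoref{lem:bound-Ui} the other way: $\gamma(G) = U_{k-1}(G) \ge (q-1)\bigl(q-2-\tfrac{1}{q-3}\bigr)^{k-2}$, and since $q < 2(n-k+1) < 2n$ we get $\gamma(G) \le (2n)^{k}$, hence $\log\gamma(G) \le k\log(2n) = k\,(1+o(1))\log n$. On the other hand $\gamma(G) \ge \gamma(H) = U_{k-1}(H) \ge (q_1(H)-1)(q_1(H)-2-\tfrac{1}{q_1(H)-3})^{k-2}$, and with $q_1(H) > 2(n-k)$ this is at least $(2(n-k))^{(1+o(1))k}$. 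So $k\,(1+o(1))\log n \ge (1+o(1))k\log(2(n-k))$, forcing $\log(n-k) \le (1+o(1))\log n$, which is automatic, so this direction alone is too weak; the real content is to combine the $c^n$ lower bound with the $(2n)^k$ upper bound to get $n\log c \le (1+o(1))k\log n$, i.e. $k \ge (1+o(1))\,\tfrac{\log c}{\log n}\,n$, and then to sharpen the constant. To get the exact asymptotic $n-k = (1+o(1))n/\log n$ I would optimize: writing $m = n-k$, the extremal kite has $\gamma \approx q^{k} \approx (2m)^{\,n-m}$, and we are free to choose $m$ to maximize this, i.e. maximize $(n-m)\log(2m)$ over $m$; differentiating, $-\log(2m) + (n-m)/m = 0$, so $m\log(2m) = n-m$, giving $m = (1+o(1))\,n/\log n$. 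Since $G$ is extremal it must (asymptotically) realize this optimum, pinning $n-k = (1+o(1))n/\log n$.

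The main obstacle is the last step: turning the heuristic "the extremal graph maximizes the kite-type expression, hence $m$ sits at the optimum of $(n-m)\log(2m)$" into a rigorous two-sided bound. The lower bound on $k$ (upper bound on $m$) is the delicate half — I must show $G$ cannot do much better than the best kite, which requires that $\gamma(G) = U_{k-1}(G)$ is, up to the $(1+o(1))$ in the exponent, controlled purely by $q$ and $k$ via \autoref{lem:bound-Ui}, and that $q$ itself cannot exceed $2(n-k+1)$; both are in hand from \autoref{lem:auxiliary-results} and \autoref{lem:bound-Ui}, so the work is careful bookkeeping of the $o(1)$ terms in the exponent (note an error of $o(1)$ in $\log q / \log n$ is multiplied by $k \sim n/\log n$, so one needs $\log q = \log n + O(\log\log n)$, which indeed holds). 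I would finish by checking that the constant $c>1$ from the $P_{n-2}\cdot K_3$ comparison, combined with $q<2n$, already forces $m = O(n/\log n)$, and then that the finer optimization of $(n-m)\log(2m)$ — valid because both $G$ and the optimal kite have the same functional form for $\log\gamma$ up to lower order — pins down the leading constant as $1$.
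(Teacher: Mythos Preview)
Your plan is correct and converges to the same mechanism as the paper---showing that $\log\gamma(G)\approx (n-m)\log(2m)$ with $m=n-k$, and that extremality forces $m$ near the maximizer of this function---but the paper is more direct. Instead of first comparing to $P_{n-2}\cdot K_3$ to extract $\gamma(G)\ge c^n$, then separately optimizing $(n-m)\log(2m)$, and finally arguing that the extremal $G$ must sit near the optimizer, the paper simply compares $G$ to the single kite $H=P_j\cdot K_{n-j+1}$ with $j=\lfloor n-n/\log n\rfloor$ chosen in advance. The crude bounds $\gamma(G)<(q-1)^{k-1}<(2(n-k)+1)^{k-1}$ and $\gamma(H)>(q_1(H)-3)^{j-1}>(2(n-j)-3)^{j-1}$, together with $\gamma(G)\ge\gamma(H)$, give one inequality whose logarithm says $(n-m)\log(2m)$ is within lower-order terms of its value at $m^*=n/\log n$; concavity near the maximum then pins $m=(1+o(1))n/\log n$. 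Your triangle-kite detour and the separate ``pinning the constant'' step are thus absorbed into a single comparison by choosing the right $j$ up front.

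One slip to fix: you repeatedly write $\gamma(G)=U_{k-1}(G)$, but at this point in the argument only $\gamma(G)\le U_{k-1}(G)$ is available from Lemma~\ref{lem:gamma}; equality requires the whole path $v_1,\ldots,v_k$ to be pendant, and Lemma~\ref{lem:auxiliary-results} has only established this for $v_1,\ldots,v_{k-2}$. Your argument in fact only needs the inequality (for the upper bound on $\gamma(G)$), so nothing breaks, but the equality sign is unjustified.
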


\begin{proof}
Let $H=P_j \cdot K_{n-j+1}$, where $j = \lfloor n - n/\log n\rfloor$. From \autoref{lem:gamma} 
and \autoref{lem:bound-Ui}, we find that 
\[
\gamma(H) = U_{j-1}(H) > (q_1(H)-3)^{j-1},
\]
and
\[
\gamma(G)\leq U_{k-1} < (q-1)^{k-1}.
\]
Since $q_1(H) > 2(n-j)$ and $q < 2(n-k+1)$, by the maximality of $\gamma(G)$, we have 
\begin{equation}\label{eq:ineq-for-solv-k}
\big(2(n-k) + 1\big)^{k-1} > \big(2(n-j) - 3\big)^{j-1}.
\end{equation}
Solving \eqref{eq:ineq-for-solv-k}, we obtain $n - k = (1+o(1)) \frac{n}{\log n}$.
\end{proof}

\begin{lemma}\label{lem:x-norm}
$q/2 - 2 < \|\x\|^2 < q/2 + 3$.
\end{lemma}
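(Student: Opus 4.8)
The plan is to extract this estimate from the quadratic form identity $\x^\top Q(G)\x = q$ together with a structural description of $G$. Writing $Q(G) = D(G) + A(G)$ and expanding in the standard way, one has
\[
q = \x^\top Q(G)\x = \sum_{v_iv_j\in E(G)} (x_i + x_j)^2,
\]
so the whole problem is to compare $\sum_{ij\in E}(x_i+x_j)^2$ with $2\|\x\|^2 = 2\sum_i x_i^2$. The difference is governed by how many edges are incident to each vertex and how large the incident entries are: expanding the square gives $q = \sum_i d(v_i) x_i^2 + 2\sum_{ij\in E} x_i x_j$. First I would isolate the dominant contribution. By Lemma 3.3(1) the vertex $v_k$ has degree $n-k+1$ and $N(v_k) = C\cup\{v_{k-1}\}$, and on the clique-like part $C\cup\{v_k\}$ (which, by the results of Subsection 3.1, induces a graph containing all edges from $v_k$) the entries $x_v$ for $v\in C$ are all close to $1$: indeed the eigenvalue equation at such a vertex forces $x_v = \Theta(1)$ since $q = \Theta(n-k)$ and $v$ sees $v_k$ with $x_k=1$. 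So $\|\x\|^2$ is, up to lower-order terms, $|C| + 1 \approx n-k+1 \approx q/2$, which is the right order; the task is to pin the error down to the additive window $(-2, +3)$.

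The key steps, in order, are: (i) record the identity $q = \sum_{ij\in E}(x_i+x_j)^2$; (ii) split the edge set into the edges inside $C\cup\{v_k\}$ (call this sum $\Sigma_1$) and the edges on the pendant path $v_1,\dots,v_{k-1},v_k$ plus the edges from $v_{k-1}$ into $S$ (call this $\Sigma_2$), so $q = \Sigma_1 + \Sigma_2$; (iii) show $\Sigma_2 = O(1)$ — in fact a small explicit bound — using Lemma 3.1/Lemma 3.2 to see that the path entries $x_1,\dots,x_{k-1}$ decay geometrically with ratio $\approx 1/\sigma = O(1/(n-k))$ away from $v_k$, so their contribution to $q$ is negligible, while $x_{k-1}$ itself is small (again by the eigenvalue equation and $q$ large); (iv) for $\Sigma_1$, use $d(v)x_v^2$ summed over $v\in C\cup\{v_k\}$: since $\sum_{v}d(v)x_v^2 = q\|\x\|^2 - 2\sum_{ij\in E}x_ix_j$ is awkward, instead argue directly that $\Sigma_1 = \sum_{v\in C\cup\{v_k\}} \deg^*(v)\,x_v^2 + 2\sum_{ij}x_ix_j$ where $\deg^*$ counts only edges inside this part, and compare this with $2\|\x\|^2$ by bounding each $x_v$ between $1 - O(1/(n-k))$ and $1$. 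Carrying the inequalities through and absorbing the $o(1)$ terms (using that $n-k = (1+o(1))n/\log n \to\infty$ by Lemma 3.3) should yield exactly $q/2 - 2 < \|\x\|^2 < q/2 + 3$ for $n$ large.

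The main obstacle is step (iv): getting the \emph{sharp} additive constants $-2$ and $+3$ rather than merely $\|\x\|^2 = q/2 + O(1)$. A cleaner route that I expect to work better is to avoid the edge-sum expansion entirely and instead use the two eigenvalue equations at $v_k$ and along the path simultaneously. From $q = q x_k = d(v_k) + \sum_{v\in N(v_k)} x_v$ one gets $\sum_{v\in C}x_v = q - d(v_k) - x_{k-1} = (q - n + k - 1) - x_{k-1}$, and from the eigenvalue equation at each $v\in C$ one can solve for $x_v$ and hence for $x_v^2$ up to terms of order $x_v/q$; summing and combining with the already-established bound $2(n-k) < q < 2(n-k+1)$ from Lemma 3.3(2), the counting of $|C| = n-k$ vertices gives $\|\x\|^2 = (n-k) + (\text{path contribution} < 1) + (\text{correction of size} < q/2 - (n-k) + O(1))$; since $q/2 - (n-k) \in (0,1)$ by Lemma 3.3(2), everything lands inside the claimed window once the path terms and the $O(1/q)$ corrections are bounded explicitly. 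The delicate point is simply bookkeeping all the $\pm 1$-sized quantities ($x_{k-1}$, the gap $q/2-(n-k)$, the sum of squares of path entries, and the deficits $1-x_v$ for $v\in C$) carefully enough that they provably fit in $(-2,+3)$; I would handle this by proving the cruder bound $|{\|\x\|^2 - q/2}| < 1 + \varepsilon$ with $\varepsilon = o(1)$ first, which immediately implies the stated inequality for large $n$.
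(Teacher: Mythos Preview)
Your proposal is a plan, not a proof, and the plan has a real gap. The central structural claim you lean on --- that ``the entries $x_v$ for $v\in C$ are all close to $1$'' because ``$v$ sees $v_k$ with $x_k=1$'' --- is false in general at this stage of the argument. Nothing yet rules out a vertex $w\in C$ whose only neighbour is $v_k$; for such a vertex the eigenvalue equation gives $(q-1)x_w = 1$, so $x_w = 1/(q-1) = o(1)$. (What \emph{is} true, and is recorded later as Lemma~3.5, is that the \emph{sum} $\sum_{v\in U} x_v$ over any $U\subset N(v_k)$ is at least $|U|-2$; but individual entries need not be $\Theta(1)$.) Both versions of your plan --- the edge-sum expansion and the ``solve the eigenvalue equation at each $v\in C$'' route --- require pointwise control of $x_v$ or of $d(v)$ inside $C$ that you do not have.

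The paper avoids all of this with two one-line observations that you did not identify. For the lower bound, apply Cauchy--Schwarz only to the neighbours of $v_k$:
\[
\|\x\|^2 > 1 + \sum_{v\in N(v_k)} x_v^2 \ \ge\ 1 + \frac{\bigl(\sum_{v\in N(v_k)} x_v\bigr)^2}{n-k+1}
= 1 + \frac{(q-(n-k+1))^2}{n-k+1},
\]
and then use $n-k+1 < q/2+1$ from Lemma~3.1(2). For the upper bound, use the trivial inequality $x_v^2\le x_v$ (since $0<x_v\le 1$) to get $\|\x\|^2 \le \sum_v x_v$, then split $V(G)=\{v_k\}\cup\{v_{k-2}\}\cup N(v_k)\cup\{v_1,\dots,v_{k-3}\}$ and bound each piece: $x_k=1$, $x_{k-2}<1$, $\sum_{v\in N(v_k)} x_v = q-(n-k+1) < q/2$, and the path entries decay geometrically with ratio $1/(q-3)$, contributing less than~$1$. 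No quadratic-form identity, no edge decomposition, no control of $G[C]$, and the constants $-2$, $+3$ fall out directly rather than being ``the main obstacle''.
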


\begin{proof}
We first prove that $\|\x\|^2 > q/2 - 2$. Indeed,
\[
\|\x\|^2 > x_{k}^{2} + \sum_{v\in N(v_k)} x_v^{2} 
> 1 + \frac{\big(\sum_{v\in N(v_k)} x_v\big)^2}{n-k+1} 
= 1 + \frac{\big(q-(n-k+1)\big)^2}{n-k+1}.
\]
Since $q > 2(n-k)$, we see $n-k+1 < q/2 + 1$. It follows that
\[
\|\x\|^2 > 1 + \frac{(q/2 - 1)^2}{q/2 + 1} > \frac{q}{2} - 2.
\]

Next, we shall prove the right-hand side. Set $x_0=0$. Then for $1\leq i \leq k-3$, we have
\[
(q - 2) x_i = x_{i-1} + x_{i+1} < x_{i} + x_{i+1},
\]
which implies that
\[ 
x_i < \frac{x_{i+1}}{q-3} < \cdots < \frac{x_{k-2}}{(q-3)^{k-2-i}} < \frac{1}{(q-3)^{k-2-i}}.
\] 
It follows from the above inequalities and $q < 2(n-k+1)$ that
\[
\|\x\|^2 < x_k + x_{k-2} + \sum_{v\in N(v_k)} x_v + \sum_{i=1}^{k-3} x_i < \frac{q}{2} + 3,
\]
as desired.
\end{proof}

\begin{lemma}\label{lem:U-bound}
For every subset $U$ of $N(v_k)$, we have
\begin{equation}\label{eq:U}
|U| - 2 < \sum_{v\in U} x_v \leq |U|.
\end{equation}
\end{lemma}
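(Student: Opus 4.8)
The plan is to derive both inequalities directly from the eigenvalue equation at $v_k$, together with the normalization $x_k=1$ and the estimates on $q$ already established in \autoref{lem:auxiliary-results}.

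For the upper bound, I would simply observe that since $\x$ is chosen so that its largest entry equals $1$, every coordinate satisfies $x_v\leq 1$; summing over $v\in U$ yields $\sum_{v\in U}x_v\leq |U|$ at once.

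For the lower bound, the key point is that it is enough to control $\sum_{v\in N(v_k)}(1-x_v)$: each summand $1-x_v$ is nonnegative, and $U\subseteq N(v_k)$, so $\sum_{v\in U}(1-x_v)\leq\sum_{v\in N(v_k)}(1-x_v)$. To evaluate the right-hand side I would use the eigenvalue equation $q=qx_k=d(v_k)x_k+\sum_{v\in N(v_k)}x_v$, which with $x_k=1$ and $d(v_k)=n-k+1$ from \autoref{lem:auxiliary-results}$(1)$ gives $\sum_{v\in N(v_k)}x_v=q-(n-k+1)$. Hence $\sum_{v\in N(v_k)}(1-x_v)=(n-k+1)-\bigl(q-(n-k+1)\bigr)=2(n-k+1)-q$, and by the bound $q>2(n-k)$ from \autoref{lem:auxiliary-results}$(2)$ this quantity is strictly less than $2$. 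Therefore $\sum_{v\in U}(1-x_v)<2$, i.e. $\sum_{v\in U}x_v>|U|-2$.

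I do not expect any real obstacle here, since all the ingredients ($d(v_k)=n-k+1$, $q>2(n-k)$, and $x_v\le 1$) are already available; the only point worth stating carefully is that the inequality $\sum_{v\in N(v_k)}(1-x_v)<2$ is \emph{strict}, which is inherited from the strict inequality $q>2(n-k)$ proved earlier.
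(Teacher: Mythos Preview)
Your proposal is correct and is essentially the same argument as the paper's: the paper bounds $\sum_{v\in N(v_k)\setminus U}x_v\le |N(v_k)|-|U|$ and subtracts this from $\sum_{v\in N(v_k)}x_v=q-(n-k+1)>|N(v_k)|-2$, which is algebraically identical to your inequality $\sum_{v\in U}(1-x_v)\le\sum_{v\in N(v_k)}(1-x_v)<2$.
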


\begin{proof}
The upper bound is clear from $x_v\leq 1$ for $v\in V(G)$. The lower bound follows from the 
inequalities
\[
\sum_{v\in N(v_k)\setminus U} x_v \leq |N(v_k)| - |U|,
\] 
and
\[
\sum_{v\in N(v_k)} x_v = q - |N(v_k)| > |N(v_k)| - 2.
\]
The last inequality is due to the fact $q > 2(n-k)$. This completes the proof of the lemma.
\end{proof}

\subsection{ The vertex degree of $v_{k-2}$ is two}
\label{subsection:degree-v-k2-2}

\begin{lemma}\label{lem:degree-k-2-equal-2}
The vertex $v_{k-2}$ has degree exactly $2$ in $G$.
\end{lemma}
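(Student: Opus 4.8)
The plan is to argue by contradiction: assume $d(v_{k-2}) \geq 3$ and construct a graph $G'$ with $\gamma(G') > \gamma(G)$, contradicting maximality. Since $v_1, \ldots, v_{k-3}$ already form a pendant path by \autoref{lem:auxiliary-results}(3), the vertex $v_{k-2}$ has neighbors $v_{k-3}$, $v_{k-1}$, and at least one more vertex $w$. Because $N(v_k) = C \cup \{v_{k-1}\}$, the extra neighbor $w$ must lie in $S = C \cap N(v_{k-1})$ or be $v_{k-1}$ itself (already counted); so $w \in S \subseteq N(v_k)$. The natural modification is to delete the edge $v_{k-2}w$ and add the edge $wv_{k-1}$ if not present, or more robustly: move $v_{k-2}$ ``down'' by detaching it from all of $N(v_k) \cap N(v_{k-2})$ and rerouting, thereby lengthening the pendant path by one while keeping the clique intact. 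Concretely I would compare $G$ with $H = P_{k} \cdot K_{n-k+1}$-type configurations, or directly form $G'$ by removing edges from $v_{k-2}$ to $N(v_k)$ and checking the effect on the Rayleigh quotient and on $\sigma$.

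The key steps, in order: (i) Record the eigenvalue equation at $v_{k-2}$, namely $q x_{k-2} = x_{k-3} + x_{k-1} + \sum_{w \in N(v_{k-2}) \cap C} x_w$, and use \autoref{lem:x-norm} together with $q > 4$ to show $x_{k-2}$ is small — specifically $x_{k-2} = O(1/q)$ times something, using that $x_{k-3} < x_{k-2}/(q-3)$ from the pendant-path inequalities and that the $C$-neighbors contribute at most $d(v_{k-2}) - 2$. (ii) Use \autoref{lem:gamma} with $j = k-1$: $\gamma(G) \leq U_{k-2}(G)/x_{k-1}$ with equality iff $v_1, \ldots, v_{k-1}$ is a pendant path. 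So $\gamma(G) = U_{k-2}/x_{k-1}$ would hold if $v_{k-2}$ had degree $2$; the presence of extra neighbors makes the bound strict, and I must quantify the loss. (iii) Construct $G'$: delete the $\geq 1$ edges from $v_{k-2}$ into $N(v_k)$ and reattach those endpoints elsewhere inside the clique (or simply delete them if $K_{n-k+1}$ stays connected — it does, since $n-k+1$ is large and removing $v_{k-2}$'s few edges leaves the clique on $C \cup \{v_{k-1}\}$ intact). This makes $v_1, \ldots, v_{k-1}$ a genuine pendant path in $G'$ while possibly shifting $q$. (iv) Show $q_1(G')$ and hence $\sigma(G')$ do not decrease too much — here the point is that we are only deleting edges incident to a low-weight vertex, so the Rayleigh quotient $\x^\top Q(G') \x / \|\x\|^2$ evaluated at the old Perron vector drops by $O(x_{k-2}) = o(1)$, whereas the gain from turning the bound \eqref{eq:gamma} into an equality at $j = k-1$ is a genuine positive factor. (v) Combine via \autoref{lem:bound-Ui} to get $\gamma(G') = U_{k-2}(G') \geq (q_1(G')-3)^{k-2} > \gamma(G)$ for $n$ large, using $k - 1 = (1-o(1))n$ from \autoref{lem:n-k-value} so the exponent dominates the constant-factor perturbation in the base.

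The main obstacle will be step (iv): controlling $q_1(G') - q_1(G)$. Deleting edges can only decrease the $Q$-spectral radius, so I need a lower bound on $q_1(G')$, which is the uncomfortable direction. The workaround is to not insist $G'$ is an edge-deletion of $G$ but rather to exhibit $G'$ as an explicit kite-like graph $P_{k'} \cdot K_{n-k'+1}$ with $k' $ chosen near $k$, compute $\gamma(G') = U_{k'-1}(G')$ exactly via the Remark, and compare using the monotonicity of $f(x)$ from the proof of \autoref{lem:auxiliary-results} together with the crude bounds $2(n-k') < q_1(G') < 2(n-k'+1)$. Then the comparison $\gamma(G') > \gamma(G)$ reduces to an inequality between $\big(2(n-k')-3\big)^{k'-1}$ and the upper bound $\big(2(n-k)+1\big)^{k-1}$ for $\gamma(G)$ coming from \autoref{lem:bound-Ui}, which is the same flavor of estimate already solved in \autoref{lem:n-k-value}. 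The delicate accounting is showing the extra clique neighbor of $v_{k-2}$ costs at least a constant factor in $\gamma$ — this is where one genuinely uses that $x_{k-2}$ appears in the denominator of \eqref{eq:gamma} and that removing the obstruction strictly increases it.
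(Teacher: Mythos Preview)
Your overall strategy---assume $d(v_{k-2})\ge 3$, modify $G$ to lengthen the pendant path, and contradict maximality---matches the paper's, but there is a real gap in the comparison step, and the paper closes it via a case split you have not anticipated.

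A minor slip first: the extra neighbor $w$ of $v_{k-2}$ lies in $C\subseteq N(v_k)$, but there is no reason for $w\in S=C\cap N(v_{k-1})$; nothing forces $w$ to be adjacent to $v_{k-1}$.

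The substantive gap is in steps (iv)--(v). After deleting the edges from $v_{k-2}$ into $C$, the pendant path extends only to $v_{k-1}$, so $\gamma(G') = U_{k-2}(G')/x_{k-1}^{G'}$, \emph{not} $U_{k-2}(G')$ as you write in (v). The comparison with $\gamma(G)=U_{k-3}(G)/x_{k-2}$ therefore hinges on the ratio $x_{k-2}/x_{k-1}^{G'}$: the extra factor $U_{k-2}/U_{k-3}\approx q-2$ must beat $x_{k-1}^{G'}/x_{k-2}$, and the latter can itself be of order $q$. Your Rayleigh argument controls $q-q_1(G')$ but says nothing about the new eigenvector entry $x_{k-1}^{G'}$. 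Your workaround---compare with an explicit kite---cannot rescue this as stated: by maximality $\gamma(G)\ge\gamma(P_{k'}\cdot K_{n-k'+1})$ for every $k'$, so crude bounds of the form $(2(n-k')-3)^{k'-1}$ versus $(2(n-k)+1)^{k-1}$ can never yield a strict inequality the right way.

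The paper proceeds in two stages. First it uses a kite comparison, but via the \emph{exact} identity $\gamma(G)=\big(U_{k-2}-(|T|-1)U_{k-3}\big)\big/\sum_{v\in T}x_v$ (from the eigenvalue equation at $v_{k-2}$, with $T=N(v_{k-2})\cap N(v_k)$), to force $\sum_{v\in T}x_v<1$ and hence $d(v_{k-2})=3$ exactly. Then, with $u$ the unique extra neighbor, it splits on $d(v_{k-1})$: if $d(v_{k-1})\ge 10$ one deletes only the single edge $v_{k-2}u$ and uses the large degree of $v_{k-1}$ to lower-bound $x_{k-1}$ and thereby control $x_{k-1}/x_{k-1}^{G'}$; if $d(v_{k-1})\le 9$ one deletes in addition all edges $\{v_{k-1}v:v\in S\}$, so that $v_1,\dots,v_k$ becomes a full pendant path and $\gamma(G')=U_{k-1}(G')$ with no eigenvector denominator at all. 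Without this dichotomy the ratio $x_{k-2}/x_{k-1}^{G'}$ is uncontrolled and the inequality you claim in (v) does not follow.
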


\begin{proof}
Assume by contradiction that $d(v_{k-2})\geq 3$. Set $T:= N(v_{k-2})\cap N(v_k)$ for short. 
Then $d(v_{k-2}) = |T| + 1$ and $|T|\geq 2$. Our proof hinges on the following claims.

\begin{claim}\label{claim:sum-x-leq-1}
$\sum_{v\in T} x_v < 1$.
\end{claim}

\begin{proof}
By the eigenvalue equation for $v_{k-2}$, we get
\[ 
\big(q - (|T|+1)\big) x_{k-2} = x_{k-3} + \sum_{v\in T} x_v.
\]
Noting that $x_{k-3} = x_{k-2} U_{k-4}/U_{k-3}$ and $U_{k-2} = (q - 2) U_{k-3} - U_{k-4}$, we have 
\[ 
\bigg(\frac{U_{k-2}}{U_{k-3}} - (|T| - 1) \bigg) x_{k-2} = \sum_{v\in T} x_v.
\]
In light of \autoref{lem:gamma} and \autoref{lem:auxiliary-results}, we have 
$\gamma(G) = U_{k-3}/ x_{k-2}$. Therefore,
\begin{equation}\label{eq:gamma-in-degree-k-2}
\gamma(G) = \frac{U_{k-3}}{x_{k-2}} = \big(U_{k-2} - (|T| - 1) U_{k-3}\big)
\bigg(\sum_{v\in T} x_v\bigg)^{-1}.
\end{equation}
If $\sum_{v\in T} x_v\geq 1$, it follows from \eqref{eq:gamma-in-degree-k-2} that $\gamma(G) < U_{k-2}$. 
On the other hand, let $H=P_{k-1} \cdot K_{n-k+2}$. By the maximality and \autoref{lem:gamma}, we get
\[
\gamma(G) \geq \gamma(H) = U_{k-2}(H),
\]
which, together with $\gamma(G) < U_{k-2}$, implies that $q > q(H) > 2(n-k+1)$, a contradiction 
yielding $\sum_{v\in T} x_v<1$.
\end{proof}

\begin{claim}\label{claim:degree-k-2-geq-3}
$d(v_{k-2})\leq 3$. 
\end{claim}

\begin{proof}
If $d(v_{k-2})\geq 4$, then $|T|\geq 3$. From \autoref{lem:U-bound}, we have 
$\sum_{v\in T} x_v > |T| - 2\geq 1$, a contradiction to \autoref{claim:sum-x-leq-1}.
\end{proof}

Combining with our assumption for contradiction that $d(v_{k-2})\geq 3$, we derive that 
$d(v_{k-2}) = 3$. Let $u$ be the unique vertex in $C$ adjacent to $v_{k-2}$. 
We may assume that $x_{k-1}\leq x_u$, otherwise we choose another path $v_1,\ldots,v_{k-2},u,v_k$.

Now we continue to prove this lemma by considering the vertex degree of $v_{k-1}$. 
For simplicity, write $d := d(v_{k-1})$. Recall that $S=N(v_{k-1})\cap C$. Then $d = |S| + 2$. 
\par\vspace{1.5mm}

\noindent {\bfseries Case 1.} $d(v_{k-1}) \geq 10$.
Let $G_1^{-} = G - \{v_{k-2}u\}$. Then $v_1,v_2,\ldots,v_{k-1}$ form a pendant path in $G_1^-$.
Set $q^{-}_1 := q_1(G_1^{-})$. By Rayleigh principle and \autoref{lem:x-norm},
\begin{equation}\label{eq:rayleigh-principle}
q_1^{-} - q \geq \frac{\x\big(Q(G^{-}_1) - Q(G)\big)\x}{\|\x\|^2} \geq
- \frac{4}{\|\x\|^2}\geq -\frac{8}{q - 4}.
\end{equation}

Let $\x^{-}$ be the principal $Q$-eigenvector of $G_1^{-}$ with maximum entry $1$. To compare 
$\gamma(G_1^-)$ with $\gamma(G)$, we use \autoref{lem:gamma} and \autoref{lem:bound-Ui} to bound 
them. Then we have
\[
\gamma(G_1^{-}) = \frac{U_{k-2}(G_1^{-})}{x_{k-1}^-} 
> \Big(q^{-}_1 - 2 - \frac{1}{q^{-}_1 - 3}\Big)^{k-3} \cdot \frac{q^{-}_1 - 1}{x_{k-1}^-},
\]
and
\begin{equation}\label{eq:gamma-G}
\gamma(G) = \frac{U_{k-3}}{x_{k-2}} < \Big(q - 2 - \frac{1}{q}\Big)^{k-4} \cdot \frac{q - 1}{x_{k-2}}.  
\end{equation}
Combining the above two inequalities, we deduce that
\begin{equation}\label{eq:G3}
\frac{\gamma(G_1^{-})}{\gamma(G)} > \frac{q^{-}_1 - 1}{q - 1}\cdot
\Big(\frac{q^{-}_1 - 2 - (q^{-}_1 - 3)^{-1}}{q - 2 - 1/q}\Big)^{k-4}
\big(q^{-}_1 - 3\big) \cdot \frac{x_{k-2}}{x_{k-1}^-}.   
\end{equation}
To proceed further, we first consider the ratio $x_{k-2}/x_{k-1}^-$. Since 
$(q-3) x_{k-2} = x_{k-3} + x_{k-1} + x_u > 2 x_{k-1}$, we see 
\[
\frac{x_{k-2}}{x_{k-1}}>\frac{2}{q-3}.
\]
Furthermore, by \autoref{lem:U-bound}, 
\[ 
(q-d)x_{k-1} > x_k + \sum_{v\in S} x_v > |S| - 1 = d-3.
\] 
Then $x_{k-1} > (d - 3)/(q - d)$. In addition, $x_{k-1}^- < d/(q^{-}_1 - d)$. In view of 
\eqref{eq:rayleigh-principle} and $d\geq 10$, we deduce that
\[
\frac{x_{k-1}}{x_{k-1}^-} > \Big(1 - \frac{3}{d}\Big) \Big(\frac{q^{-}_1 - d}{q - d}\Big) > \frac{3}{5}.
\]
As a consequence, 
\[
\frac{x_{k-2}}{x_{k-1}^-} = \frac{x_{k-2}}{x_{k-1}} \cdot \frac{x_{k-1}}{x_{k-1}^-} 
> \frac{6}{5 (q-3)}.
\]
Combining with \eqref{eq:G3} and \eqref{eq:rayleigh-principle}, we obtain 
\begin{align*}
\frac{\gamma(G_1^{-})}{\gamma(G)} 
& > \frac{11}{10} \cdot \Big(\frac{q^{-}_1 - 2 - (q^{-}_1 - 3)^{-1}}{q - 2 - 1/q}\Big)^{k-4} \\
& > \frac{11}{10} \cdot \Big(1 - \frac{10}{q^2}\Big)^{k-4} \\
& > \frac{11}{10} \cdot \Big(1 - \frac{10(k-4)}{q^2}\Big) \\
& > 1.
\end{align*}
The third inequality follows from Bernoulli's inequality. Hence, $\gamma(G_1^-) > \gamma(G)$, a contradiction.
\par\vspace{1.5mm}

\noindent {\bfseries Case 2.} $d(v_{k-1}) \leq 9$.
Let $G_2^{-}$ be the graph obtain from $G$ by deleting edges $\{v_{k-1}v: v\in S\}$ and 
$\{uv_{k-2}\}$. Denote $q^{-}_2:= q_1(G_2^{-})$. 
We have
\[
\gamma(G_2^{-}) = U_{k-1}(G_2^{-}) > (q^{-}_2 - 1) \Big(q^{-}_2 - 2 - \frac{1}{q^{-}_2 - 3}\Big)^{k-2}.
\]
It follows from \eqref{eq:gamma-G} that
\[
\frac{\gamma(G_2^{-})}{\gamma(G)} > \frac{q^{-}_2 - 1}{q - 1} \cdot 
\Big(\frac{q^{-}_2 - 2 - (q^{-}_2 - 3)^{-1}}{q - 2 - 1/q}\Big)^{k-4}
(q^{-}_2 - 3)^2 \cdot x_{k-2}.
\]
Since $(q-3) x_{k-2} > 2 x_{k-1}$ and $(q-2) x_{k-1}\geq (q-d(v_{k-1})) x_{k-1} > 1$, 
then 
\[
x_{k-2} > \frac{2}{(q-2)(q-3)}.
\]
Using similar arguments as above, we have $\gamma(G_2^{-}) > \gamma(G)$, a contradiction.
\end{proof}

\subsection{The vertex degree of $v_{k-1}$ is two}
\label{subsection:degree-v-k1-2}

The next lemma gives a more precise upper bound for $q_1(G)$.

\begin{lemma}\label{lem:small-q}
$q < 2(n-k) + 3/2$.
\end{lemma}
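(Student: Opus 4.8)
The plan is to refine the bound $q < 2(n-k+1)$ from \autoref{lem:auxiliary-results}(2) down to $q < 2(n-k) + 3/2$ by comparing $\gamma(G)$ against the kite $H = P_{k-1}\cdot K_{n-k+2}$ more carefully than in \autoref{lem:degree-k-2-equal-2}. By the results of Subsections~\ref{subsection:auxiliary-results}--\ref{subsection:degree-v-k2-2}, we now know $v_1,\ldots,v_{k-2}$ form a pendant path, $d(v_k)=n-k+1$, and $d(v_{k-2})=2$, so $v_{k-2}$ is an internal vertex of a pendant path of length at least $k-2$. The key point is that \autoref{lem:gamma} now gives an \emph{equality} $\gamma(G) = U_{k-2}(G)/x_{k-1}$ (taking $j=k-1$, since $v_1,\ldots,v_{k-1}$ all lie on the pendant path through $v_{k-2}$ — wait, we only know the path up to $v_{k-2}$, so more carefully $\gamma(G) = U_{k-3}(G)/x_{k-2}$ with equality, and we will also use the eigenvalue equation at $v_{k-1}$).

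First I would set up the eigenvalue equation at $v_{k-1}$: writing $d := d(v_{k-1}) = |S|+2$ and using $x_k = 1$, we have $(q-d)\,x_{k-1} = x_{k-2} + \sum_{v\in S} x_v$. Combined with \autoref{lem:U-bound} applied to $S \subseteq N(v_k)$, this pins down $x_{k-1}$ between roughly $(d-1)/(q-d)$ and $d/(q-d)$, and since $x_{k-2} = x_{k-1}\,U_{k-4}/U_{k-3}$ (from the pendant-path equality) we can then express $\gamma(G) = U_{k-3}/x_{k-2}$ purely in terms of $U$'s, $q$, $d$, and the small error $\sum_{v\in S}x_v - |S|$. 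Next I would compare with $\gamma(H) = U_{k-2}(H)$ where $H = P_{k-1}\cdot K_{n-k+2}$: maximality gives $\gamma(G) \geq \gamma(H)$, hence (by monotonicity of the function $f$ from \autoref{lem:auxiliary-results}, exactly as in that proof) $\sigma(G) \geq \sigma(H)$ and so $q \geq q_1(H)$. The reverse direction is the substance: I would show that if $q$ were too large, namely $q \geq 2(n-k) + 3/2$, then the $U_{k-3}/x_{k-2}$ expression for $\gamma(G)$ would be strictly smaller than $U_{k-2}(H)$, contradicting maximality. Here one uses $q_1(H) > 2(n-k+1) - \varepsilon$ for a suitable small correction (via \autoref{lem:bound-Ui} or a direct estimate on the kite), together with the bound $x_{k-2} \leq U_{k-3}/U_{k-2}(H) \cdot (\text{something})$, and the estimate on $n-k$ from \autoref{lem:n-k-value} to control the exponent $k$ — the factor $(q/q')^{k}$ is extremely sensitive, so even a constant-order gap between $q$ and $2(n-k)$ forces a contradiction once $k$ is of order $n/\log n$.

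The main obstacle I expect is getting the constant $3/2$ (as opposed to, say, $2$ or $1$) correct, which requires a genuinely tight two-sided estimate rather than the crude $(q-1)$ vs.\ $(q-3)$ bounds of \autoref{lem:bound-Ui}. Specifically, I would need: (i) a lower bound on $q_1(H)$ for the kite $H = P_{k-1}\cdot K_{n-k+2}$ of the form $q_1(H) > 2m - 1 + c/m$ where $m = n-k+1$ and $c$ is an explicit constant, obtained by plugging a good test vector (constant on the clique, geometric on the path) into the Rayleigh quotient of $Q(H)$; and (ii) an upper bound on $\gamma(G) = U_{k-3}(G)/x_{k-2}$ that captures the loss coming from $d(v_{k-1})$ possibly exceeding $2$ and from $v_k$ having exactly $n-k+1$ neighbors rather than participating in a clique — i.e.\ showing the Perron entry $x_{k-2}$ in $G$ is not too large. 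Balancing these two estimates against the $(1+o(1))n/\log n$ value of $k$ is where the $3/2$ threshold emerges; I would carry out the algebra by writing $q = 2(n-k) + t$ and showing the inequality $\gamma(G) \geq \gamma(H)$ fails for $t \geq 3/2$ and $n$ large.
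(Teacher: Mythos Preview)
Your plan is a fundamentally different route from the paper's, and it has a genuine gap. The paper does \emph{not} compare $G$ with the kite $H=P_{k-1}\cdot K_{n-k+2}$ here; instead it runs a perturbation argument. Assuming $q\ge 2(n-k)+3/2$, the eigenvalue equation at $v_k$ forces $x_v\ge 1/2$ for every $v\in N(v_k)$ (this is exactly where the constant $3/2$ enters). One then locates a vertex $z\in C$ with $v_{k-1}z\notin E(G)$, sets $G^{+}=G+v_{k-1}z$, and shows $\gamma(G^{+})>\gamma(G)$: Rayleigh gives $q^{+}-q>(x_{k-1}+x_z)^2/\|\x\|^2>1/\|\x\|^2$, while a separate eigenvector comparison shows $x_{k-1}^{+}-x_{k-1}<4/(n-k)$. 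Feeding these into $\gamma=U_{k-2}/x_{k-1}$ via \autoref{lem:bound-Ui}, and using $k\gg (n-k)^2$ from \autoref{lem:n-k-value}, yields the contradiction. None of this appears in your outline.

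Your comparison approach breaks at the step ``hence $\sigma(G)\ge\sigma(H)$ and so $q\ge q_1(H)$.'' That deduction worked in \autoref{lem:auxiliary-results} because there $\gamma(G)\le U_{k-1}(G)$ and $\gamma(H)=U_{k-1}(H)$ had the \emph{same} index; here $\gamma(G)=U_{k-2}(G)/x_{k-1}$ (indeed, since $d(v_{k-2})=2$ the pendant path reaches $v_{k-1}$, so your hedging toward $U_{k-3}/x_{k-2}$ is unnecessary) versus $\gamma(H)=U_{k-2}(H)$, and the factor $1/x_{k-1}>1$ sits on the wrong side. In fact $q<2(n-k+1)<q_1(H)$ always, so your claimed inequality is false. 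To salvage the comparison you would need a lower bound on $x_{k-1}$ exceeding $U_{k-2}(G)/U_{k-2}(H)$; the hypothesis $q\ge 2(n-k)+3/2$ does give $x_{k-1}\ge 1/2$, but since $q$ and $q_1(H)$ can both lie within $O(1/(n-k))$ of $2(n-k+1)$, the ratio $U_{k-2}(G)/U_{k-2}(H)$ need not fall below $1/2$, and your ``balancing'' never identifies a mechanism that forces it to. The argument you are sketching is essentially that of \autoref{lem:xk-1-upper-bound}, which \emph{uses} \autoref{lem:small-q} as input rather than proving it.
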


\begin{proof}
Assume for contradiction that $q\geq 2(n-k) + 3/2$. We have the following claims.

\begin{claim}\label{claim:xu-large}
$x_v\geq 1/2$ for each $v\in N(v_k)$.
\end{claim} 

\begin{proof} 
If there is a vertex $w\in N(v_k)$ such that $x_w< 1/2$, then 
\[
n-k + \frac{1}{2} \leq (q - (n-k+1)) x_k = \sum_{u\in N(v_k)} x_u < \frac{1}{2} + (n-k),
\]
a contradiction completing the proof of the claim.
\end{proof}

\begin{claim}\label{claim}
There is a vertex $z\in C$ such that $v_{k-1}z\notin E(G)$. 
\end{claim}

\begin{proof}
Assume for contradiction that $v_{k-1}v\in E(G)$ for each $v\in C$. From the eigenvalue equations for 
$v_{k-1}$ and $v_k$, we get $(q-n+k-1) (x_{k-1} - x_k) = x_k + x_{k-2} > 0$, and therefore 
$x_{k-1} > x_k = 1$, which leads to a contradiction. This completes the proof of the claim.
\end{proof}

Now, let $G^{+} = G + \{v_{k-1}z\}$, and $\x^+$ be the principal $Q$-eigenvector of $G^+$ with maximum 
entry $1$. Our goal is to show $\gamma(G^+)>\gamma(G)$, and therefore deduce a contradiction.

Set $q^+:=q_1(G^+)$ for short. By the Rayleigh principle and \autoref{claim:xu-large} we obtain
\[
q^+ - q \geq \frac{\x^{\mathrm{T}} Q(G^+)\x-\x^{\mathrm{T}} Q(G)\x}{\|\x\|^2}
=\frac{(x_{k-1} + x_z)^2}{\|\x\|^2}>\frac{1}{\|\x\|^2},
\]
which, together with \autoref{lem:x-norm}, gives
\begin{equation}\label{eq:lambda-plus-lambda}
q^+ > q + \frac{2}{q+6}.
\end{equation}

The next claim gives an upper bound for $x_{k-1}^+ - x_{k-1}$.

\begin{claim}\label{claim:xplus-x}
$x_{k-1}^+ - x_{k-1} < 4/(n-k)$.
\end{claim}

\begin{proof} 
By the eigenvalue equations for $q$ and $q^+$ with respect to $v_{k-1}$, we have
\begin{align*}
(q - |S| - 2) x_{k-1} & = x_{k-2}+x_k+\sum_{v\in S} x_v, \\
(q^+ - |S| - 3) x_{k-1}^+ & = x_{k-2}^+ + x_k^+ + x_z^+ +\sum_{v\in S} x_v^+.
\end{align*}
It follows from $q^+ > q$ that
\begin{align*}
(q - |S| - 2) (x_{k-1}^+ - x_{k-1}) 
& < (x_{k-2}^+ - x_{k-2}) + (x_k^+ + x_{k-1}^+ + x_z^+ - 1)
+ \sum_{v\in S} (x_v^+ - x_v) \\
& < 3 + \sum_{v\in S} (x_v^+ - x_v).
\end{align*}
It remains to bound $\sum_{v\in S} (x_v^+ - x_v)$. To this end, note that
\begin{equation}\label{eq:sum-x-vs-U}
\sum_{v\in N(v_k)\setminus S} x_v\leq d(v_k) - |S|~~ \text{and} \
\sum_{v\in N(v_k)} x_v = q - (n-k+1) \geq d(v_k) - \frac{1}{2}.
\end{equation}
We immediately obtain 
\[
|S| - \frac{1}{2} \leq\sum_{v\in S} x_v \leq |S|.
\]
Obviously, $\sum_{v\in S} x_v^+ \leq |S|$. Therefore,
\[
\sum_{v\in S} (x_v^+ - x_v) = \sum_{v\in S} x_v^+ - \sum_{v\in S} x_v \leq \frac{1}{2}.
\]
Putting the above inequalities together, we arrive at
\[
x_{k-1}^+ - x_{k-1} < \frac{7}{2(q - |S| - 2)}
< \frac{4}{n-k}.
\]
The last inequality follows from the facts $|S|\leq n-k$ and $q>2(n-k)$.
This completes the proof of the claim.
\end{proof}

To compare $\gamma(G^+)$ with $\gamma(G)$, we use \autoref{lem:gamma} and \autoref{lem:bound-Ui} 
to bound them. On the one hand,
\[
\gamma(G^+) 
= \frac{U_{k-2}^+}{x_{k-1}^+}
> \bigg(q^+ - 2 - \frac{1}{q^+ - 3}\bigg)^{k-3} \cdot\frac{q^+ - 1}{x_{k-1}^+}.
\]
On the other hand, by \autoref{lem:degree-k-2-equal-2} we have
\[
\gamma(G) = \frac{U_{k-2}}{x_{k-1}} < \Big(q - 2 - \frac{1}{q}\Big)^{k-3} \cdot\frac{q - 1}{x_{k-1}}.
\]
Combining \eqref{eq:lambda-plus-lambda} we deduce that
\begin{align*}
\frac{\gamma(G^+)}{\gamma(G)} 
& > \bigg(\frac{q^+ - 2 - (q^+ - 3)^{-1}}{q - 2 - 1/q}\bigg)^{k-3} \cdot\frac{x_{k-1}}{x^+_{k-1}} \\
& > \bigg(1+\frac{3}{2q^2}\bigg)^{k-3} \cdot\frac{x_{k-1}}{x^+_{k-1}}.
\end{align*}
It follows from Bernoulli's inequality that
\begin{equation} \label{eq:ratio-Gplus-G}
\frac{\gamma(G^+)}{\gamma(G)}
> \bigg(1 + \frac{3(k-3)}{2 q^2}\bigg) \cdot\frac{x_{k-1}}{x^+_{k-1}} 
> \bigg(1 + \frac{k}{q^2}\bigg) \cdot\frac{x_{k-1}}{x^+_{k-1}}.
\end{equation}

To finish the proof, we consider the following two cases.

\noindent {\bfseries Case 1.} The maximum eigenvector entry of $\x^+$ is still attained by vertex $v_k$. 

Using the same arguments as \autoref{claim:xu-large}, we get $x_{k-1}^+ > 1/2$. Together with
\autoref{claim:xplus-x} gives
\[
\frac{x_{k-1}}{x_{k-1}^+} > 1 - \frac{4}{(n-k) x_{k-1}^+}
> 1 - \frac{8}{n-k}.
\]
In light of \eqref{eq:ratio-Gplus-G} and \autoref{lem:n-k-value} we find
\[
\frac{\gamma(G^+)}{\gamma(G)} 
> \Big(1 + \frac{k}{q^2}\Big) \Big(1 - \frac{8}{n-k}\Big)
> 1. 
\]
\noindent {\bfseries Case 2.} The maximum eigenvector entry of $\x^+$ is no longer attained 
by vertex $v_k$. 

For any vertex $v\in C$, by the eigenvalue equations for $v_k$ and $v$, we get
\begin{align*}
\big(q^+ - (n-k+1)\big) x_k^+ & = x_{k-1}^+ + \sum_{u\in C} x_u^+, \\
\big(q^+ - d_{G^+}(v)\big) x_v^+ & \leq x_{k-1}^+ + x_k^+ + \sum_{u\in C\setminus\{v\}} x_u^+,
\end{align*}
which imply that $x_k^+ \geq x_v^+$. Hence the maximum entry of $\x^+$ must be 
attained by $v_{k-1}$. Therefore, $\gamma(G^+) = U_{k-2}^+$. Applying \autoref{claim:xplus-x} 
to $x_{k-1}^+=1$, we see $x_{k-1} > 1 - 4/(n-k)$. By \eqref{eq:ratio-Gplus-G} 
again, we have $\gamma(G^+) > \gamma(G)$.

Summing the above two cases, we see $\gamma(G^+)>\gamma(G)$, which is a contradiction to 
the maximality of $\gamma(G)$.
\end{proof}

Based on \autoref{lem:small-q}, we can give a precise estimation for $x_{k-1}$.

\begin{lemma}\label{lem:xk-1-upper-bound}
$x_{k-1} < n^{-1/6}$.
\end{lemma}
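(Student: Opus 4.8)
The plan is to exploit the exact identity $\gamma(G)=U_{k-2}(G)/x_{k-1}$, which is already available: by \autoref{lem:auxiliary-results}(3) the vertices $v_1,\dots,v_{k-2}$ form a pendant path, and by \autoref{lem:degree-k-2-equal-2} we have $d(v_{k-2})=2$, so the chain of inequalities in \autoref{lem:gamma} is an equality at $j=k-1$. Hence $x_{k-1}=U_{k-2}(G)/\gamma(G)$, and it suffices to exhibit a connected $n$-vertex graph whose principal ratio beats $n^{1/6}U_{k-2}(G)$. I would take the kite $H:=P_k\cdot K_{n-k+1}$; by maximality $\gamma(G)\ge\gamma(H)$, and by the Remark $\gamma(H)=U_{k-1}(H)$, so
\[
x_{k-1} \;=\; \frac{U_{k-2}(G)}{\gamma(G)} \;\le\; \frac{U_{k-2}(G)}{U_{k-1}(H)} .
\]
Write $m:=n-k$ and $q_H:=q_1(H)$, and recall $q_H>2m$ (as $K_{n-k+1}$ is a proper subgraph of the connected graph $H$, a fact already used in the proof of \autoref{lem:auxiliary-results}).

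Next I would bound both quantities with \autoref{lem:bound-Ui}: $U_{k-2}(G)\le(q-1)\big(q-2-\tfrac1q\big)^{k-3}<(q-1)(q-2)^{k-3}$, and $U_{k-1}(H)\ge(q_H-1)\big(q_H-2-\tfrac1{q_H-3}\big)^{k-2}$. A point to be careful about here is that one must keep the factor $q_H-2-\tfrac1{q_H-3}$ in the denominator and \emph{not} weaken it to $q_H-3$, since that alone would cost a factor $\approx(1+\tfrac1{q_H-3})^{k-2}\approx n^{1/2}$ and destroy the estimate. Dividing, and using $q<2m+\tfrac32$ from \autoref{lem:small-q} (this is precisely where that sharper bound is needed; the weaker $q<2m+2$ of \autoref{lem:auxiliary-results}(2) would not be enough) together with $q_H>2m$, one gets, for $n$ large,
\[
x_{k-1} \;<\; \frac{q-1}{q_H-1}\cdot\frac{1}{\,q_H-2-\tfrac1{q_H-3}\,}\cdot\Big(\frac{q-2}{\,q_H-2-\tfrac1{q_H-3}\,}\Big)^{k-3}
\;<\; \frac{C}{m}\,\Big(1+\frac{3/2+o(1)}{2m}\Big)^{k-3},
\]
for an absolute constant $C$, the product of the first two factors being $O(1/m)$ because $q-q_H=O(1)$.

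Finally I would control the power with $(1+t)^{k-3}\le e^{(k-3)t}$ and \autoref{lem:n-k-value}, which gives $m=(1+o(1))\tfrac n{\log n}$ and hence $\tfrac{k-3}{2m}=\big(\tfrac12+o(1)\big)\log n$; the exponent is therefore $(k-3)\cdot\tfrac{3/2+o(1)}{2m}=\big(\tfrac34+o(1)\big)\log n$, so the power is at most $n^{3/4+o(1)}$. Since also $\tfrac1m=n^{-1+o(1)}$, this yields $x_{k-1}<n^{-1/4+o(1)}<n^{-1/6}$ for all sufficiently large $n$. I expect the only real difficulty to be the exponent bookkeeping: the argument works exactly because the discrepancy between $q-2\le 2m-\tfrac12$ and $q_H-2-\tfrac1{q_H-3}\approx 2m-2$ is only about $\tfrac32$, while the path length $k-3$ is of order $n\approx m\log n$, so the amplified ratio is $n^{3/4}$ rather than a power $\ge n^{1}$; one must track the constants (and retain the $\tfrac1{q_H-3}$ correction term) so that $\tfrac34<1-\tfrac16$. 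A minor auxiliary point is to confirm that in $H=P_k\cdot K_{n-k+1}$ the Perron vector attains its extremes at the degree-$1$ and degree-$(n-k+1)$ vertices, which is the content of the Remark and makes $\gamma(H)=U_{k-1}(H)$ exact.
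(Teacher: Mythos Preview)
Your argument is correct and yields the claimed bound; the bookkeeping with the $o(1)$ terms is sound, and in particular retaining the correction $-1/(q_H-3)$ in the denominator is exactly what keeps the amplified ratio at $n^{3/4+o(1)}$ rather than $n^{1+o(1)}$.

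The paper, however, proceeds slightly differently: it takes as comparison graph $H=P_{k-1}\cdot K_{n-k+2}$ (with a clique one larger and a path one shorter) rather than your $P_k\cdot K_{n-k+1}$. The point of this choice is that then $q_1(H)>2(n-k+1)$, which by \autoref{lem:small-q} strictly exceeds $q$; moreover $\gamma(H)=U_{k-2}(H)$ has the same index as $U_{k-2}(G)$. Consequently the paper directly obtains
\[
x_{k-1}\;<\;\bigg(\frac{q-2-1/q}{\,q_1(H)-2-1/(q_1(H)-3)\,}\bigg)^{k-3}\cdot\frac{q-1}{q_1(H)-1}
\;<\;\Big(1-\frac{1}{5(n-k)}\Big)^{k-3}\;<\;e^{-(k-3)/(5(n-k))}\;<\;n^{-1/6},
\]
a product of factors each less than $1$, with no auxiliary $1/m$ prefactor needed. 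Your approach instead allows the base ratio to exceed $1$ (since you only know $q_H>2m$ while $q$ may be as large as $2m+3/2$) and compensates via the extra factor $1/(q_H-2-1/(q_H-3))\sim 1/(2m)$ coming from the mismatch $U_{k-1}$ versus $U_{k-2}$; this costs the additional exponent accounting you flagged, but in fact lands at $n^{-1/4+o(1)}$, nominally sharper than the paper's $n^{-1/5+o(1)}$. Either route suffices for the stated lemma.
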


\begin{proof}
Let $H=P_{k-1}\cdot K_{n-k+2}$. In view of \autoref{lem:gamma} and \autoref{lem:bound-Ui} we 
conclude that
\begin{align*}
\gamma(H) = U_{k-2}(H)
& \geq \Big(q_1(H) - 2 - \frac{1}{q_1(H) - 3}\Big)^{k-3}\cdot (q_1(H) - 1) \\
& > \Big(2(n-k) - \frac{1}{2(n-k) - 1}\Big)^{k-3} (2n - 2k + 1).
\end{align*}
On the other hand, from \autoref{lem:small-q} we have
\[
\gamma(G) = \frac{U_{k-2}}{x_{k-1}}
< \Big(2(n-k) -\frac{1}{2} - \frac{1}{2(n-k) + 3/2}\Big)^{k-3} \cdot\frac{2(n-k) + 1/2}{x_{k-1}}.
\]
Since $\gamma(G)\geq\gamma(H)$ we deduce that
\begin{align*}
x_{k-1} 
& < \bigg(\frac{2(n-k) - 1/2 - (2n-2k+3/2)^{-1}}{2(n-k) - (2n-2k-1)^{-1}}\bigg)^{k-3} \\
& < \Big(1-\frac{1}{5(n-k)}\Big)^{k-3} \\
& < {\mathbf e}^{-\frac{k-3}{5(n-k)}} \\
& < n^{-1/6}.
\end{align*}
The last inequality uses the fact that $(k-3)/(n-k) > 5\cdot (\log n)/6$ by \autoref{lem:n-k-value}.
\end{proof}

\begin{lemma}\label{lem:degree-vk-1-two}
The degree of $v_{k-1}$ is $2$ in $G$.
\end{lemma}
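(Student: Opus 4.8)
The plan is to argue by contradiction. Assume $d(v_{k-1})\geq 3$, write $S=N(v_{k-1})\cap C$ and $\Sigma=\sum_{v\in S}x_v$, so $|S|\geq 1$ and $d(v_{k-1})=|S|+2$; the goal is to exhibit a connected graph on $n$ vertices with strictly larger principal ratio. The first step is an exact formula for $\gamma(G)$. By \autoref{lem:degree-k-2-equal-2} and \autoref{lem:auxiliary-results}$(3)$ the path $v_1,\dots,v_{k-1}$ is pendant, so $x_{k-2}=x_{k-1}U_{k-3}/U_{k-2}$ and $\gamma(G)=U_{k-2}/x_{k-1}$ by \autoref{lem:gamma}. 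Substituting $x_{k-2}$ into the eigenvalue equation $(q-|S|-2)x_{k-1}=x_{k-2}+x_k+\Sigma$ (recall $x_k=1$) and using $U_{k-1}=(q-2)U_{k-2}-U_{k-3}$ gives, after rearranging,
\[
\gamma(G)=\frac{U_{k-2}}{x_{k-1}}=\frac{U_{k-1}-|S|\,U_{k-2}}{1+\Sigma}.
\]

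Next I would delete the offending edges: put $G^{*}=G-\{v_{k-1}v:v\in S\}$, which is connected, has $n$ vertices, and in which $v_1,\dots,v_k$ is a pendant path. Normalising the principal $Q$-eigenvector $\x^{*}$ of $G^{*}$ to have maximum entry $1$, the pendant-path equations give $x_1^{*}=x_k^{*}/U_{k-1}(G^{*})\leq 1/U_{k-1}(G^{*})$, hence $\gamma(G^{*})\geq U_{k-1}(G^{*})$. By the maximality of $\gamma(G)$ it therefore suffices to prove $U_{k-1}(G^{*})>\gamma(G)$, i.e.\
\[
\Sigma\cdot U_{k-1}(G^{*})+|S|\,U_{k-2}>U_{k-1}-U_{k-1}(G^{*}),
\]
which contradicts maximality and finishes the proof.

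It remains to estimate the two sides. Set $\beta=q-q_1(G^{*})$. The Rayleigh quotient gives $\beta\leq\|\x\|^{-2}\sum_{v\in S}(x_{k-1}+x_v)^2$; combining $x_{k-1}<n^{-1/6}$ from \autoref{lem:xk-1-upper-bound} (so $(x_{k-1}+x_v)^2\leq x_v^2+3n^{-1/6}$), $\|\x\|^2>q/2-2$ from \autoref{lem:x-norm}, and the bound $|S|=O(n^{5/6}/\log n)$ that follows from \autoref{lem:U-bound} ($\Sigma>|S|-2$, so $x_{k-1}>(|S|-1)/(q-|S|-2)$, and then $x_{k-1}<n^{-1/6}$ again) yields $\beta=O(n^{-1/6})$. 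Applying \autoref{lem:bound-Ui} to $U_{k-1}(G)$ and $U_{k-1}(G^{*})$ across this small gap bounds $U_{k-1}-U_{k-1}(G^{*})$ above by $U_{k-1}$ times a quantity of order $\beta k/q$, and also gives $U_{k-1}(G^{*})\geq\tfrac12U_{k-1}$, while $U_{k-2}>U_{k-1}/(q-2)$ is immediate from the recurrence. Dividing the displayed inequality by $U_{k-1}$, it reduces to an elementary estimate of the shape
\[
\tfrac12\Sigma+\tfrac{|S|}{q-2}>\tfrac{\beta}{q-1}+\tfrac{(k-2)\beta}{q-3}+\tfrac{4(k-2)}{q(q-4)(q-3)},
\]
which holds for all large $n$, using $|S|\geq 1$, $\beta=O(n^{-1/6})$, $n-k=(1+o(1))n/\log n$ from \autoref{lem:n-k-value}, and $q=2(n-k)+O(1)$ from \autoref{lem:auxiliary-results}$(2)$.

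The crux, and the main obstacle, is precisely this balancing act. Deleting the $|S|$ edges at $v_{k-1}$ drops the $Q$-index by $\beta$, and since $U_{k-1}$ grows with $q$ roughly like $q^{k}$, this shrinks $U_{k-1}(G^{*})$ relative to $U_{k-1}$ by a factor about $e^{-\beta k/q}$; because $k/q\approx\tfrac12\log n$, such a deletion would be ruinous unless $\beta$ is genuinely of size $o(1/\log n)$. What rescues the argument is the bound $x_{k-1}<n^{-1/6}$ of \autoref{lem:xk-1-upper-bound}: through the eigenvalue equation at $v_{k-1}$ it forces $|S|$, and therefore $\beta$, to be as small as $O(n^{-1/6})$, so $e^{-\beta k/q}=1-o(1)$ and the (otherwise tiny) gain from erasing the factor $(1+\Sigma)^{-1}$ in $\gamma(G)$ prevails.
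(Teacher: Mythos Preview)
Your argument is correct and follows the same overall strategy as the paper: delete the $|S|$ edges at $v_{k-1}$, use the smallness of $x_{k-1}$ (from \autoref{lem:xk-1-upper-bound}) to control the drop $\beta=q-q_1(G^{*})$ via Rayleigh, and then compare the two principal ratios through \autoref{lem:bound-Ui}. The execution, however, is different and somewhat cleaner. The paper never writes down your exact identity $\gamma(G)=(U_{k-1}-|S|U_{k-2})/(1+\Sigma)$; instead it proves separately that at most one vertex of $C$ can have eigenvector entry at most $(1-x_{k-1})/2$, and then splits into cases $|S|\ge 2$ and $|S|=1$, using different lower bounds on $x_{k-1}$ in each (namely $x_{k-1}>3/(2q-7)$ and $x_{k-1}>6/(5q)$), and in the subcase $|S|=1$ with $x_w$ small it performs an edge rotation $v_{k-1}w\mapsto wz$ rather than a pure deletion. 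Your single displayed inequality absorbs all of this at once: the key point that makes it work uniformly is that your Rayleigh bound $\beta\le(\Sigma+3|S|n^{-1/6})/\|\x\|^2$ scales with $\Sigma$ itself, so when $\Sigma$ is tiny (the troublesome $|S|=1$, $x_w\to 0$ regime) the loss $U_{k-1}-U_{k-1}(G^{*})$ is automatically tiny as well, and the fixed gain $|S|/(q-2)$ from the $|S|U_{k-2}$ term suffices. What the paper's route buys is slightly more explicit constants at each stage; what yours buys is a unified argument with no case analysis and no auxiliary edge rotation.
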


\begin{proof}
It suffices to show that $|S|=0$. Assume for contradiction that $|S|\geq 1$. Let $G^-$ be the 
graph obtained from $G$ by removing these $|S|$ edges, i.e., $G^- = G - \{v_{k-1}v: v\in S\}$. 
Our goal is to show that $\gamma(G^-)>\gamma(G)$, and therefore get a contradiction. 

We first show that there is at most one vertex in $C$ such that its component in $\x$ no more than 
$(1-x_{k-1})/2$. Indeed, if there are $v,w\in C$ such that $x_v, x_w\leq (1-x_{k-1})/2$, then 
\begin{align*}
n-k-1 
& < \big(q - (n-k+1)\big) x_k \\
& = x_{k-1} + \sum_{u\in C} x_u \\ 
& \leq x_{k-1} + x_v + x_w + (n-k-2) \\ 
& \leq n-k-1,
\end{align*}
which leads to a contradiction. 

We consider the following two cases.
\par\vspace{1.5mm}

\noindent {\bfseries Case 1.} $|S|\geq 2$. By eigenvalue equation we see
\begin{equation}\label{eq:eig-equ-vk-1}
(q - |S| - 2) x_{k-1} > 1 + \sum_{u\in S} x_u > 1 + \frac{(|S| - 1) (1 - x_{k-1})}{2} 
> \frac{|S| (1-x_{k-1})}{2}.
\end{equation}
Solving this inequality gives 
\[
|S| < \frac{2(q - 2) x_{k-1}}{1 + x_{k-1}}.
\]
Combining with \autoref{lem:x-norm} and \autoref{lem:xk-1-upper-bound}, we deduce that
\begin{equation}\label{eq:lambda-lambda-minus}
q - q^- \leq \frac{|S| (1 + x_{k-1})^2}{\|\x\|^2} 
< \frac{2|S| (1 + x_{k-1})^2}{q - 4}
< 5 x_{k-1} < \frac{5}{n^{1/6}}. 
\end{equation}
By \eqref{eq:eig-equ-vk-1} again, we have
\[
(q - |S| - 2) x_{k-1} > 1 + \frac{(|S| - 1) (1 - x_{k-1})}{2},
\]
which implies that 
\begin{equation}\label{eq:xk-1-lower-bound}
x_{k-1} > \frac{|S| + 1}{2q - |S| - 5} \geq \frac{3}{2q - 7}.
\end{equation}
The last inequality is due to our assumption $|S|\geq 2$.

Now we are ready to compare $\gamma(G^-)$ with $\gamma(G)$. By \autoref{lem:bound-Ui} and 
\eqref{eq:xk-1-lower-bound} we see 
\[
\gamma(G)=\frac{U_{k-2}}{x_{k-1}}
< \Big(q - 2 - \frac{1}{q}\Big)^{k-3}\cdot\frac{q - 1}{x_{k-1}}
< \Big(q - 2 - \frac{1}{q}\Big)^{k-3}\cdot \frac{(q - 1)(2q - 7)}{3}.
\]
On the other hand,
\[
\gamma(G^-) = U_{k-1}(G^-) \geq \Big(q^- - 2 - \frac{1}{q^- - 3}\Big)^{k-2} (q^- - 1).
\]
Hence, the above two inequalities, together with \eqref{eq:lambda-lambda-minus}, imply that
\begin{align*}
\frac{\gamma(G^-)}{\gamma(G)} 
& > \frac{6}{5} \cdot \bigg(\frac{q^- - 2 - (q^- - 3)^{-1}}{q - 2 - q^{-1}}\bigg)^{k-3}
> \frac{6}{5} \cdot \bigg(1- \frac{6}{n^{1/6} q}\bigg)^{k-3} >1,
\end{align*}
a contradiction. 
\par\vspace{1.5mm}

\noindent {\bfseries Case 2.} $|S| = 1$. Let $w$ be the unique vertex in $C$ that adjacent 
to $v_{k-1}$. If $x_w > (1 - x_{k-1})/2$, we can deduce a similar contradiction by 
placing \eqref{eq:xk-1-lower-bound} with
\[
x_{k-1} > \frac{1 + x_w}{q - 3} > \frac{6}{5q}
\]
in the proof of Case 1. 

Hence, we assume $x_w \leq (1 - x_{k-1})/2$, and therefore $x_v > (1 - x_{k-1})/2$ for each 
$v\in C\setminus\{w\}$. Evidently, there is a vertex $z\in C\setminus\{w\}$ such that $z$ 
is not adjacent to $w$. Let $\widetilde{G}$ be the graph obtained by deleting edge $v_{k-1} w$ 
and adding $wz$. Since $x_z > x_w$, we obtain $q_1(\widetilde{G}) > q$. Set $\widetilde{q} := q_1(\widetilde{G})$
for short. We have
\[
\gamma(\widetilde{G}) = U_{k-1}(\widetilde{G}) \geq \Big(\widetilde{q} - 2 - \frac{1}{\widetilde{q} - 3} \Big)^{k-2}
(\widetilde{q} - 1) > \Big(q - 2 - \frac{1}{q - 3} \Big)^{k-2} (q - 1).
\]
It follows from $(q - 3) x_{k-1} \geq (q - d(v_{k-1})) x_{k-1} > 1$ that
\begin{align*}
\frac{\gamma (\widetilde{G})}{\gamma (G)}
& > \Big(1 - \frac{4k}{q^3}\Big)\cdot \big(q - 2 - o(1)\big) \cdot x_{k-1} \\
& > \Big(1 - \frac{4k}{q^3}\Big)\cdot \frac{q - 2 - o(1)}{q - 3} \\
& > 1,
\end{align*}
a contradiction completing the proof of Case 2, and hence finishing the proof of 
\autoref{lem:degree-vk-1-two}.
\end{proof}

Now we are ready to prove our main theorem.
\par\vspace{1.5mm}

\noindent {\bfseries Proof of \autoref{thm:Main}.}
According to \autoref{lem:auxiliary-results}, \autoref{lem:degree-k-2-equal-2} and 
\autoref{lem:degree-vk-1-two}, we derive that $v_1,v_2,\ldots,v_k$ form a pendant path.
Hence, it remains to show that the induced subgraph $G[C]$ is a clique. Otherwise, 
let $H = P_{k} \cdot K_{n-k+1}$. Then $G$ is a proper subgraph of $H$, then 
$q_1(G) < q_1(H)$. On the other hand, from \autoref{lem:gamma} and the maximality of 
$\gamma(G)$, we have 
\[
U_{k-1}(G) = \gamma(G) \geq \gamma(H) = U_{k-1}(H),
\]
which yields that $q_1(G)\geq q_1(H)$, a contradiction completing the proof of this theorem.

\end{document}